\theoremstyle{plain}
\newtheorem{lemma}{Lemma}[section]
\newtheorem{theorem}[lemma]{Theorem}
\newtheorem{proposition}[lemma]{Proposition}
\newtheorem{corollary}[lemma]{Corollary}
\theoremstyle{definition}
\newtheorem{example}[lemma]{Example}
\newtheorem{definition}[lemma]{Definition}
\newtheorem{remark}[lemma]{Remark}
\numberwithin{equation}{section}
\def\begeq{\stepcounter{lemma}\begin{equation}}
\date{}
\begin{document}

\title{Descent Systems for Bruhat Posets}

\author{Lex E. Renner}
\date{March 2008}
\maketitle

\begin{abstract}
Let $(W,S)$ be a finite Weyl group and let $w\in W$. It is widely
appreciated that the descent set
\[
D(w)=\{s\in S\;|\; l(ws)<l(w)\}
\]
determines a very large and important chapter in the study of Coxeter
groups. In this paper we generalize some of those results to the situation of
the Bruhat poset $W^J$ where $J\subseteq S$. Our main results here
include the identification of a certain subset $S^J\subseteq W^J$
that convincingly plays the role of $S\subseteq W$, at least from the point of
view of descent sets and related geometry. The point here is to
use this resulting {\em descent system} $(W^J,S^J)$ to explicitly encode
some of the geometry and combinatorics that is intrinsic to the poset $W^J$.
In particular, we arrive at the notion of an {\em augmented poset},
and we identify the {\em combinatorially smooth} subsets $J\subseteq S$
that have special geometric significance in terms of a certain corresponding
torus embedding $X(J)$. The theory of $\mathscr{J}$-irreducible monoids provides
an essential tool in arriving at our main results.
\end{abstract}

\vspace{10pt}

\section*{Introduction}

If $(W,S)$ is a Weyl group and $w\in W$, $s\in S$, then
either $ws<w$ or else $w<ws$. Hence we define
\[
D(w)=\{s\in S\;|\; l(ws)<l(w)\},
\]
the {\em descent set} of $w\in W$. This innocuous looking situation
is at the heart of many important results in geometry, combinatorics, group
theory and representation theory.

Evidently, the interest in these objects began with Solomon \cite{S}, who defines
a certain subalgebra $B\subseteq\mathbb{Q}[W]$, and uses it to help understand the
representations of $W.$ The algebra $B$ is often called the {\bf descent algebra} since
it can be defined in terms of descent sets. Brown \cite{Brown} looks at this descent algebra
and reconstitutes it as the semigroup algebra of a certain idempotent (``face") semigroup
associated with the reflection arrangement of $W$.

The numbers $|D(w)|$ can be used to calculate the Betti numbers of the associated
torus embedding $X(\emptyset)$ of $W$. These Betti numbers can be obtained directly from the
$h$-vector of the associated rational, convex polytope. In \cite{Stt1}
Stanley proves that the $h$-vector of any simplicial, convex polytope is a symmetric,
unimodal sequence. Stembridge \cite{Stem} proves that the canonical representation of
$W$ on $H^*(X(\emptyset);\mathbb{Q})$ is a permutation representation and, with the help of
Dolgachev-Lunts \cite{DL}, he  computes this representation. In \cite{Br} Brenti studies
these descent polynomials (i.e. the Poincar\'e  polynomials of $X(\emptyset)$) as analogues
of the {\em Eulerian polynomials}. He also looks at the $q$-analogues of these polynomials.

In the theory of group embeddings $|D(w)|$ is an important ingredient in the calculation
of the Betti numbers of the ``wonderful" compactification of a semisimple group
of adjoint type. See \cite{DP,R1}.

In this paper we expand the entire study to include all Bruhat posets $W^J$,
where $J\subseteq S$. In particular, we study the relationship between $W^J$
and a certain torus embedding $X(J)$. This leads us to the notion of an
{\bf augmented poset} $(W^J,\leq, \{\nu_s\}_{s\in S\setminus J})$. This
ordering on $W^J$ is not the usual Bruhat order on $W^J$. It is quantified
in terms of a certain ``ascent/descent" structure on $W^J$.
Further analysis leads us to the notion of a {\bf descent system} $(W^J,S^J)$.
These descent systems are particularly interesting if $X(J)$ is quasi-smooth
in the sense of Danilov \cite{D}. In the remainder of this paper we refer to this
condition as {\bf rationally smooth}, which is currently the accepted terminology.
For the convenience of the reader we give a precise definition.

\begin{definition} \label{ratsm.def}
Let $X$ be a complex algebraic variety of dimension $n$.
Then $X$ is {\em rationally smooth} at $x\in X$ if there is a neighbourhood
$U$ of $x$ in the complex topology such that, for any $y\in U$,
\[
H^m(X,X\setminus\{y\})=(0)
\]
for $m\neq 2n$ and
\[
H^{2n}(X,X\setminus\{y\})=\mathbb{Q}.
\]
Here $H^*(X)$ denotes the cohomology of $X$ with rational coefficients.
\end{definition}

See \cite{Brion} for a modern account of this key notion, along with some
important results about rationally smooth varieties with torus action. 

The main point of this paper is to identify and study the set
\[
\{J\subseteq S\;|\; X(J)\;\text{is rationally smooth}\}.
\]
 See Theorem \ref{almostsmooth.thm}
and Corollary \ref{howmany.thm} below. The descent system  $(W^J,S^J)$ leads to a useful 
combinatorial analogue of the method of Bialynicki-Birula \cite{BB}. This allows us to 
uncover some of the finer geometry of $X(J)$.
The main results of this paper could be stated entirely in the language of
Weyl groups, root systems and polytopes. However, we were led to these results by trying to
calculate the Betti numbers of a certain class of projective varieties that arise
naturally from the theory of reductive monoids. It turns out that ``step one" of this
monoid problem required that we quantify the Betti numbers of $X(J)$. This eventually requires
that we quantify the ``ascent/descent" structure on $W^J$ for certain $J$. Reductive monoids
are an essential tool in showing us how to do this.

\section{$W$-invariant Polytopes}
\label{one.sec}

Let $V$ be a rational vector space and
let $r : W\to GL(V)$ be the usual reflection representation of the
Weyl group $W$. Along with this goes the {\bf Weyl chamber}
$\mathscr{C}\subseteq V$ and the corresponding set of {\bf simple
reflections} $S\subseteq W$. The Weyl group $W$ is generated by $S$, and $\mathscr{C}$ is
a fundamental domain for the action of $W$ on $V$. See Chapter III of \cite{Hum}
for a detailed discussion of Weyl groups.

Let $\lambda\in\mathscr{C}$. In this section we describe the face lattice $\mathscr{F}_\lambda$
of the polytope
\[
\mathscr{P}_\lambda=Conv(W\cdot\lambda),
\]
the convex hull of $W\cdot\lambda$
in $V$. The face lattice $\mathscr{F}_\lambda$ depends only on $W_\lambda=\{w\in W\;|\;
w(\lambda)=\lambda\}=W_{J}=\langle s\;|\;s\in J\rangle$, where $J=\{s\in S\;|\; s(\lambda)=\lambda\}$.
Thus we describe $\mathscr{F}_\lambda=\mathscr{F}_J$ explicitly in terms of $J\subseteq S$.

Closely associated with these polytopes is a certain class of reductive,
algebraic monoids. We use what is known about this class of monoids
to calculate $\mathscr{F}_J$ in terms of the underlying Dynkin diagram
of $(W,S)$.

We now recall some results first recorded in \cite{PR}. Throughout the
paper we use the language and techniques of linear algebraic monoids.
Unfortunately this theory is not widely appreciated, but luckily the
main results and constructions have recently been assembled in \cite{R2}.
See, especially, Chapters 4, 5, 7, and 8 of \cite{R2}. Throughout the paper we
work over the field $\mathbb{C}$ of complex numbers. That is, all algebraic groups
and monoids are assumed to be algebraic varieties defined over $\mathbb{C}$.
Much of what is said in this paper is valid over any algebraically closed field.
Restricting our discussion to varieties over the complex numbers is required only
for the sake of Definition \ref{ratsm.def}.
Let $M$ be an irreducible, normal algebraic monoid with reductive unit
group $G$. We refer to such monoids as {\bf reductive}. The reader
can find any unproved statements about reductive monoids in \cite{P,R2}.
See Solomon's survey \cite{Sol2} for a less technical introduction to the
subject.

If $M$ is a reductive monoid with unit group $G$ we let $B\subseteq G$ be a Borel
subgroup of $G$ and $T\subseteq B$ a maximal torus of $G$. We let $\overline{T}$ denote the
Zariski closure of $T$ in $M$. By part b) of Theorem 4.5 of \cite{R2}, $\overline{T}$ is a
normal, affine torus embedding. The set of {\bf idempotents} $E(\overline{T})$ of
$\overline{T}$ is defined to be
\[
E(\overline{T})=\{e\in\overline{T}\;|\; e^2=e\}.
\]
There is exactly one idempotent in each $T$-orbit on $\overline{T}$.
In the cases of interest in this paper, $E(\overline{T})\setminus\{0\}$ can be
canonically identified (as a poset) with the face lattice $\mathscr{F}_\lambda$
for appropriate $\lambda\in\mathscr{C}$. It turns out that this poset structure on
$E(\overline{T})$ is given by
\[
e \leq f\;\text{if}\; ef=e.
\]
We note that $e\leq f$ if and only if $eT\subseteq\overline{fT}$.
We let $E_1=E_1(\overline{T})=\{e\in E(\overline{T})\;|\; dim(Te)=1\}$. In the
above-mentioned identification, $E_1$ is identified with the vertices of
$\mathscr{F}_\lambda$. We shall see that the combinatorial structure of $E_1$
is much richer because $\overline{T}$ comes from the reductive monoid $M$.

The $G\times G$-orbits of $M$ are particularly important throughout this paper.
Let
\[
\Lambda=\{e\in\overline{T}\;|\; eB=eBe\}
\]
be the {\bf cross section lattice} of $M$ relative to $T$ and $B$. See Chapter 9 of \cite{P}.
It is a basic fact that
\[
M=\bigsqcup_{e\in\Lambda}GeG,
\]
where $GeG\subseteq\overline{GfG}$ if and only if $ef=e$. See Theorem 4.5 of \cite{R2}.

As above we let $S\subseteq W$ be the set of {\bf simple involutions} of $W$ relative to
$T$ and $B$. We can regard $S$ as the set of vertices of a graph with edges $\{(s,t)\;|\; st\neq ts\}$.
Thus we may speak of the connected components of any subset of $S$.

A reductive monoid $M$ with $0\in M$ is called {\bf $\mathscr{J}$-irreducible}
if $M\backslash\{0\}$ has exactly one minimal $G\times G$-orbit. See \cite{PR},
or Section 7.3 of \cite{R2} for a systematic discussion of this important class of
reductive monoids, and for a proof of the following Theorem.

\begin{theorem}   \label{jirred.thm}
Let $M$ be a reductive monoid. The following are equivalent.
\begin{enumerate}
	\item $M$ is $\mathscr{J}$-irreducible.
	\item There is an irreducible rational representation
	      $\rho : M\to End(V)$ which is finite as a morphism
	      of algebraic varieties.
	\item If $\overline{T}\subseteq M$ is the Zariski closure in $M$ of a maximal
	      torus $T\subseteq G$ then the Weyl group $W$ of $T$ acts transitively
	      on the set of minimal nonzero idempotents of $\overline{T}$.
\end{enumerate}
\end{theorem}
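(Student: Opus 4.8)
The plan is to prove the cycle $(2)\Rightarrow(3)\Leftrightarrow(1)\Rightarrow(2)$, treating $(1)\Leftrightarrow(3)$ as essentially formal and concentrating the real work in the passage between the abstract orbit condition and the existence of a finite irreducible representation. First I would set up the combinatorial dictionary. By Theorem 4.5 of \cite{R2} the $G\times G$-orbits are indexed by $\Lambda$ with $GeG\subseteq\overline{GfG}$ iff $ef=e$, so condition (1) says exactly that $(\Lambda,\leq)$ has a unique minimal nonzero element. I would then record two standard facts about the cross section lattice (see Chapter 9 of \cite{P}): that $\Lambda$ meets every $W$-orbit on $E(\overline{T})$ in exactly one point, and that $W$ acts on $E(\overline{T})$ by automorphisms of the poset $(E(\overline{T}),\leq)$. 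I would also observe that the minimal nonzero idempotents of $\overline{T}$ are precisely $E_1$: if $e\neq 0$ and $\dim(Te)=1$ then no idempotent $f$ satisfies $0<f<e$, while a nonzero idempotent with $\dim(Te)=0$ would be a nonzero $T$-fixed point of $\overline{T}$, which cannot occur once $0\in M$. Granting this, $(1)\Leftrightarrow(3)$ is immediate: since $W$ preserves $\leq$ and $\Lambda$ is a transversal, the minimal nonzero elements of $\Lambda$ are exactly the orbit representatives of the minimal nonzero idempotents, so there is a unique such representative iff $W$ acts transitively on $E_1$.

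For $(2)\Rightarrow(3)$ I would use highest weight theory together with toric geometry. Diagonalizing $T$ gives $V=\bigoplus_\mu V_\mu$, and since $\rho$ is finite the restriction $\overline{T}\to\overline{\rho(T)}$ is a finite dominant morphism of normal affine torus embeddings; it therefore corresponds to a finite-index inclusion of character lattices with the same rational cone, and hence induces a bijection of the faces of that cone, i.e. a $W$-equivariant bijection of idempotents. Because $\rho$ preserves orbit dimensions, each $e\in E_1(\overline{T})$ maps to a rank-one idempotent $\rho(e)$, which is the projection onto a single weight line $V_\mu$ with $\mu$ a vertex of the weight polytope. As $V$ is irreducible with highest weight $\lambda$, those vertices are exactly the orbit $W\lambda$, and $W$ acts transitively on the corresponding projections. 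Transporting this back through the bijection $\rho|_{E(\overline{T})}$ gives transitivity of $W$ on $E_1(\overline{T})$, which is (3).

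The substantial direction is $(1)\Rightarrow(2)$. Let $e_0\in\Lambda$ be the minimal nonzero element, so that $Ge_0G$ is the unique minimal nonzero $G\times G$-orbit and is contained in every nonzero closed $G\times G$-stable subset of $M$. Choose any faithful finite-dimensional representation $U$ of the affine monoid $M$. Since $G$ is dense in $M$, a subspace is $G$-stable iff it is $M$-stable, so the $M$-composition factors of $U$ are exactly its $G$-irreducible subquotients. Using a composition series and the identity $e_0^{\,n}=e_0$, I would show that $e_0$ must act nontrivially on some factor $V$; thus $V$ is an irreducible representation of $M$ with $\rho_V(e_0)\neq 0$. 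Then $\rho_V^{-1}(0)$ is a closed $G\times G$-stable set that does not contain $e_0$, hence does not contain $Ge_0G$, hence equals $\{0\}$. To upgrade this to finiteness I would first produce, by averaging an interior cocharacter of the cone of $\overline{T}$ over $W$, a central one-parameter subgroup $\gamma$ with $\lim_{t\to 0}\gamma(t)=0$; this makes $\mathbb{C}[M]$ a non-negatively graded ring with $\mathbb{C}[M]_0=\mathbb{C}$, relative to which $\rho_V$ is graded. Since $\rho_V$ kills only the irrelevant point, a graded Nakayama argument (equivalently, properness of the induced morphism of projective quotients) forces $\mathbb{C}[M]$ to be module-finite over the image of $\mathbb{C}[\mathrm{End}(V)]$, so $\rho_V$ is finite.

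The main obstacle I expect is exactly this final finiteness step: passing from the purely set-theoretic statement $\rho_V^{-1}(0)=\{0\}$ to module-finiteness of $\rho_V$. This is where the conical structure of $M$ is indispensable, and it is the one place where genuine geometry, rather than the formal orbit combinatorics, is needed. The other delicate point, to be handled carefully but not expected to resist, is the verification that $\rho|_{E(\overline{T})}$ is a bijection in the proof of $(2)\Rightarrow(3)$, which rests on the classification of finite dominant morphisms of normal toric varieties.
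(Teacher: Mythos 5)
The paper does not actually prove this theorem: it is stated with a pointer to \cite{PR} and to Section 7.3 of \cite{R2}, so there is no in-paper argument to measure yours against. Judged against the standard proof in those sources, your outline is essentially the right one: the dictionary between $G\times G$-orbits, the cross section lattice $\Lambda$, and $W$-orbits on $E(\overline{T})$ gives $(1)\Leftrightarrow(3)$; identifying the rank-one idempotents of $\overline{\rho(T)}$ with the projections onto the extremal weight lines $V_{w\lambda}$ gives $(2)\Rightarrow(3)$; and the conical structure of $M$ (a central one-parameter subgroup contracting to $0$, making $\mathbb{C}[M]$ positively graded with $\mathbb{C}[M]_0=\mathbb{C}$) is indeed the mechanism that upgrades $\rho_V^{-1}(0)=\{0\}$ to finiteness. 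You have correctly located the one genuinely geometric step.

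Two points need repair. First, in $(1)\Rightarrow(2)$ the selection of $V$ would fail as written: a trivial composition factor always satisfies $\rho_V(e_0)=\mathrm{id}\neq 0$, yet yields the constant representation, for which $\rho_V^{-1}(0)=\emptyset$ and which is certainly not finite. You must compare $\rho_V(e_0)$ with $\rho_V(0)$ rather than with $0$: on a trivial factor both equal the identity, while on a nontrivial irreducible factor $\rho_V(0)=0$ (its image is a $G$-stable subspace on which $G$ acts trivially), so if the two agreed on every factor then $\rho_U(e_0)=\rho_U(0)$, contradicting faithfulness. This produces a nontrivial irreducible $V$ with $\rho_V(e_0)\neq 0=\rho_V(0)$, after which your orbit-closure argument does give $\rho_V^{-1}(0)=\{0\}$ and the graded Nakayama step applies. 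Second, and more a matter of convention than of your argument: conditions (2) and (3) only close the cycle under the standing assumption $0\in M$, which is built into the definition of $\mathscr{J}$-irreducible. For instance $M=SL_n$ with its defining representation satisfies (2) literally (a closed immersion is finite) but has no zero and no minimal nonzero orbit. Your implication $(2)\Rightarrow(3)\Rightarrow(1)$ should therefore say where the existence of the zero, and hence of the unique closed $G\times G$-orbit used in the minimality argument, comes from; in \cite{PR,R2} this is part of the ambient hypotheses.
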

Notice in particular that one can construct, up to finite morphism, all
$\mathscr{J}$-irreducible monoids from irreducible representations of a
semisimple group. Indeed, let $G_0$ be semisimple and let $\rho : G_0\to End(V)$
be an irreducible representation. Define $M_1\subseteq End(V)$ to be the Zariski
closure of $\mathbb{C}^*\rho(G_0)$ where $\mathbb{C}^*\subseteq End(V)$ is the set of homotheties.
Finally let $M(\rho)$ be the normalization of $M_1$. Then, according to
Theorem~\ref{jirred.thm}, $M(\rho)$ is $\mathscr{J}$-irreducible.

By the results of Section 4 of \cite{PR}, if $M$ is $\mathscr{J}$-irreducible, there is a unique,
minimal, nonzero idempotent $e_1\in E(\overline{T})$ such that $e_1B=e_1Be_1$, where $B$ is the
given Borel subgroup containing $T$. If $M$ is $\mathscr{J}$-irreducible we say that
$M$ is {\bf $\mathscr{J}$-irreducible of type $J$} if, for this idempotent $e_1$,
\[
J=\{s\in S\;|\; se_1=e_1s\},
\]
where $S$ is the set of simple involutions relative to $T$ and $B$. The
set $J$ can be determined in terms of any irreducible representation
satisfying condition {\em 2} of Theorem~\ref{jirred.thm}. Indeed, let $\lambda\in X(T)_+$
be any highest weight such that $\{s\in S\;|\; s(\lambda)=\lambda\;\}=J$. Then
$M(\rho_\lambda)$ is $\mathscr{J}$-irreducible of {\bf type} $J$ where $\rho_\lambda$
is the irreducible representation of $G_0$ with highest weight
$\lambda$. The representation $\rho_\lambda$ determines a representation of $M(\rho_\lambda)$
on $V$. Furthermore, any two $\mathscr{J}$-irreducible monoids with a
finite, dominant morphism between them are of the same type.
If $e_1$ is the above-mentioned minimal idempotent then $B^-e_1=e_1B^-e_1$,
where $B^-$ is the Borel subgroup opposite $B$. We observe that $e_1Me_1$ is a reductive
monoid with idempotent set $\{0,e_1\}$ and thus $dim(e_1Me_1)=1$. Hence
$e_1B^-e_1$ is also one-dimensional. Thus there exists a character $\chi :
B^-\to \mathbb{C}^*$ such that $be_1=e_1be_1=\chi(b)e_1$ for all $b\in B^-$. It follows
that $B^-$ acts on $e_1(V)$ by the rule
\[
\rho_\lambda(b)(v)=\rho_\lambda(b)(\rho_\lambda(e_1)(v))=\chi(b)\rho_\lambda(e_1)(v)=\chi(b)v.
\]
Therefore $L=e(V)\subseteq V$
is the unique one-dimensional $\rho_\lambda(B^-)$-stable subspace of $V$ with
weight $\lambda$. In particular, $\chi|T=\lambda$ and
$P=\{g\in G_0\;|\; \rho_\lambda(g)(L)=L\;\}$ is a parabolic subgroup of $G_0$
of type $J$.

We now describe the $G\times G$-orbit structure of a $\mathscr{J}$-irreducible monoid
of type $J\subseteq S$. The following result was first recorded in \cite{PR}.

\begin{theorem} \label{jirredorbit.thm}
Let $M$ be a $\mathscr{J}$-irreducible monoid of type $J\subseteq S$.
\begin{enumerate}
	\item There is a canonical one-to-one order-preserving correspondence between
	      the set of $G\times G$-orbits acting on $M$ and the set of $W$-orbits
	      acting on the set of idempotents of $\overline{T}$. This set is
	      canonically identified with $\Lambda=\{e\in E(\overline{T})\;|\; eB=eBe\}$.
	\item $\Lambda\setminus\{0\}\cong \{I\subseteq S\;|\;\text{no connected component
	      of $I$ is contained entirely in $J$}\}$ in such a way that $e$ corresponds
	      to $I\subseteq S$ if $I=\{s\in S\;|\; se=es\neq e\}$. If we let
	      $\Lambda_2=\{e\in\Lambda\;|\; dim(Te)=2\}$ then this bijection identifies
	      $\Lambda_2$ with $S\setminus J$.
	\item If $e\in\Lambda\backslash\{0\}$ corresponds to $I$,  as in 2 above,
	      then $C_W(e)=W_K$ where $K=I\cup\{s\in J\;|\;st=ts\;\text{for all}\;
	      t\in I\}$.
\end{enumerate}
\end{theorem}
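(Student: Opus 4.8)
For Part 1, I would lean on the general structure theory already recalled in the excerpt. The decomposition $M=\bigsqcup_{e\in\Lambda}GeG$ immediately gives a bijection between $G\times G$-orbits and $\Lambda$, and the stated relation $GeG\subseteq\overline{GfG}\iff ef=e$ shows this bijection is order-preserving for the order $e\le f\iff ef=e$ on $\Lambda$. It then remains to identify $\Lambda$ with the set of $W$-orbits on $E(\overline{T})$. For this I would use that $W=N(T)/T$ acts on the poset $E(\overline{T})$ and that $\Lambda$ is a cross-section of this action: every idempotent is $W$-conjugate to a unique $e$ with $eB=eBe$, and for $e,f\in E(\overline{T})$ one has $GeG=GfG$ iff $e$ and $f$ are $W$-conjugate. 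This yields the triple identification, and the argument is essentially formal once the cross-section property of $\Lambda$ is in hand.

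For Part 2, which is the combinatorial core, I would pass to the defining representation $\rho_\lambda$ with highest weight $\lambda$ satisfying $W_\lambda=W_J$, and identify each $e\in E(\overline{T})\setminus\{0\}$ with the face of $\mathscr{P}_\lambda=\mathrm{Conv}(W\cdot\lambda)$ spanned by the weights on which $\rho_\lambda(e)$ acts as the identity. Under this identification $\Lambda\setminus\{0\}$ corresponds to the faces containing the dominant vertex $\lambda$, and one shows every such face has the form $F_I=\mathrm{Conv}(W_I\cdot\lambda)$ with $W_I=\langle I\rangle$. The assignment $e\mapsto I=\{s\in S: se=es\ne e\}$ records exactly the simple reflections that preserve $F_I$ while acting nontrivially on it. The source of the connected-component condition is the factorization $W_I=\prod_C W_C$ over the connected components $C$ of $I$: a component $C\subseteq J$ has $W_C\subseteq W_J$ fixing $\lambda$, hence contributes nothing to $F_I$ and forces $F_I=F_{I\setminus C}$. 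Excluding every $I$ with a component inside $J$ makes $I\mapsto F_I$ a bijection onto the dominant faces, and for such $I$ one checks that each $s\in I$ acts nontrivially on $F_I$, so that $\lambda^*(e)=I$. For the final assertion I would verify $\dim(Te)=|I|+1$; then $e_1\leftrightarrow I=\emptyset$ has $\dim(Te_1)=1$, and the elements with $\dim(Te)=2$ are exactly those with $|I|=1$, i.e. the singletons $\{s\}$ with $s\notin J$, giving $\Lambda_2\cong S\setminus J$.

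For Part 3, I would observe that $C_W(e)=\{w\in W: we=ew\}$ is the setwise stabilizer in $W$ of the face $F_I$, and that stabilizers of dominant faces are standard parabolic subgroups, so $C_W(e)=W_K$ with $K=\{s\in S: se=es\}$. This $K$ splits as $I\sqcup\lambda_*(e)$, where $\lambda_*(e)=\{s: se=es=e\}$, so the remaining task is to prove $\lambda_*(e)=\{s\in J: st=ts\text{ for all }t\in I\}$. An element $s$ satisfies $se=e$ precisely when $s$ fixes $\lambda$, forcing $s\in J$, and normalizes $W_I$; for a simple reflection $s\in J\setminus I$ the latter holds iff $s$ commutes with every $t\in I$, which is exactly the stated description of $K$. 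The step I expect to be the main obstacle is Part 2: making the face correspondence precise and proving simultaneously the injectivity of $I\mapsto F_I$ on component-admissible sets and the matching equality $\lambda^*(e)=I$, including the delicate case of an $s\in I\cap J$ that fixes $\lambda$ yet still moves other vertices of $F_I$, so that it is correctly recorded in $\lambda^*$ rather than in $\lambda_*$. Verifying the dimension formula $\dim(Te)=|I|+1$ in the presence of such reflections is the most computation-heavy point.
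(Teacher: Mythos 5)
The paper does not actually prove this theorem: it is quoted verbatim from Putcha--Renner \cite{PR} (see also Section 7.3 and Lemma 7.8 of \cite{R2}), so there is no in-paper argument to compare yours against. Judged on its own terms, your sketch follows the same route as the cited sources and is essentially sound: part 1 via the cross-section lattice (the decomposition $M=\bigsqcup_{e\in\Lambda}GeG$ plus the fact that $\Lambda$ is a transversal for the $W$-action on $E(\overline{T})$), and parts 2 and 3 via the identification of $E(\overline{T})\setminus\{0\}$ with the face lattice of $\mathscr{P}_\lambda$, with $\Lambda\setminus\{0\}$ corresponding to the faces $F_I=\mathrm{Conv}(W_I\cdot\lambda)$ meeting the dominant chamber. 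Your diagnosis of where the work lies is also accurate: the injectivity of $I\mapsto F_I$ on component-admissible sets, the verification that every $s\in I$ (including $s\in I\cap J$, which fixes $\lambda$ but is linked within $I$ to some $t\notin J$) genuinely moves $F_I$, and the dimension count $\dim(Te)=|I|+1$. Two points you should not wave at: in part 3 the claim that the setwise stabilizer of a dominant face is a \emph{standard} parabolic needs its own justification (it follows because such a face contains a point of the closed chamber in its relative interior, and stabilizers of points of $\overline{\mathscr{C}}$ are standard parabolics), and the converse direction of $\{s\;|\;se=es=e\}=\{s\in J\;|\;st=ts\ \text{for all}\ t\in I\}$ requires the same chaining-through-$I$ argument as the delicate case you flag in part 2. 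Neither is a gap in conception, only in execution; both are carried out in \cite{PR}.
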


\noindent It is worthwhile to pause and notice that $\Lambda$ is completely
determined by $J$. See \cite{R2} for a systematic discussion of $\mathscr{J}$-irreducible
monoids, in particular Lemma 7.8 of \cite{R2}. Notice also that part {\em 1} of Theorem
\ref{jirredorbit.thm} is true for any reductive monoid. See Theorem 4.5 of \cite{R2}
for more of those details.

Let $M$ be a $\mathscr{J}$-irreducible monoid of type $J\subseteq S$ and assume that
$\rho : M\to End(V)$ is an irreducible representation
which is finite as a morphism. Let $G$ be the unit group of $M$ with maximal torus
$T\subset G$. Then let $G_0$ be the semisimple part of $G$
with maximal torus $T_0=G_0\cap T$, and let $\rho_\lambda=\rho|G_0$,
with highest weight $\lambda\in\mathscr{C}$, the rational
Weyl chamber of $G_0$. Then, as above, $J=\{s\in S\;|\; s(\lambda)=\lambda\;\}$.
Recall the polytope $\mathscr{P}_\lambda=Conv(W\cdot\lambda)$, which is the
convex hull of $W\cdot\lambda$ in $X(T_0)\otimes\mathbb{Q}$, where $X(T_0)$ is the
set of characters of $T_0$.
The following corollary of Theorem \ref{jirredorbit.thm} above describes the
face lattice of $\mathscr{P}_\lambda$ in terms of the Weyl group
$(W,S)$.

\begin{corollary}  \label{polyfromjirr.thm}
Let $W$ be a Weyl group and let $r : W\to GL(V)$
be the usual reflection representation of $W$. Let $\mathscr{C}\subseteq V$
be the rational Weyl chamber and let $\lambda\in\mathscr{C}$. Assume that
$J=\{s\in S\;|\; s(\lambda)=\lambda\}$. Then the set of orbits of $W$ acting on
the face lattice $\mathscr{F}_\lambda$ of $\mathscr{P}_\lambda$ is in
one-to-one correspondence with $\{I\subseteq S\;|\;\text{no connected
component of $I$ is contained entirely in $J$}\}$.
\end{corollary}

The subset $I\subseteq S$ corresponds to the unique face $F\in\mathscr{F}_\lambda$
with $I=\{s\in S\;|\; s(F)=F\;\text{and}\; s|F\neq id\}$ whose relative interior
$F^0$ has nonempty intersection with $\mathscr{C}$. See section 7.2 of \cite{R2}
for a detailed discussion of the relationship between $\Lambda$ and the Weyl chamber.

Let $M$ be a $\mathscr{J}$-irreducible monoid of type $J\subseteq S$ and
let $\overline{T}$ be the closure in $M$ of a maximal torus $T$ of $G$.
By part {\em b)} of Theorem 5.4 of \cite{R2}, $\overline{T}$ is a
normal variety. Define
\[
X(J)=[\overline{T}\backslash\{0\}]/\mathbb{C}^*.
\]
The terminology is justified since $X(J)$ depends only on $J$ and not on $M$ or
$\lambda$. The set of distinct, normal $\mathscr{J}$-irreducible monoids associated with $X(J)$
can be identified with the set $\mathscr{C}^J=\{\lambda\in\mathscr{C}\;|\;C_S(\lambda)=J\}$.
In the case $J=\emptyset$, $X(J)$ is the torus embedding studied in \cite{Br,DL,Stem}.

\section{The Augmented Poset}     \label{three.sec}

In this section we define the {\bf augmented poset} $(W^J,\leq,\{\nu_s\})$
associated with the subset $J$ of $S$. Recall that $W^J\subseteq W$ is the set
of minimal length coset representatives of $W_J$ in $W$, and $\leq$ is the usual Bruhat
ordering on $W_J$.

To achieve our objective we use some techniques from
the theory of linear algebraic  monoids. We use this theory to obtain some
important results relating $W^J$ to a certain finite, partially ordered set
$E_1$ of idempotents. That done, we obtain the desired ``ascent/descent"
structure on the poset $W^J$. See Proposition~\ref{dichotomy.prop}.
Our construction has a fundamental relationship with
the extremely important {\bf descent systems} as discussed in Theorem~\ref{alltherest.thm}
and Section \ref{descent.sec}. The reader who does not want to {\em engage the monoids}
might be able to find his own proofs of Proposition~\ref{dichotomy.prop} and
Theorem~\ref{alltherest.thm} using his favorite techniques. See the table in
Remark~\ref{lexicon.rk} for a handy translation between the monoid
jargon and the Bruhat poset jargon. The theory of reductive monoids serves as an
ideal method to help quantify the combinatorics of $W^J$ in geometric terms.

Let $M$ be a reductive, algebraic monoid with unit group $G$. Let $B\subseteq G$
be a Borel subgroup of $G$ and let $T\subseteq B$ be a maximal torus of $B$.
As before $E(\overline{T})=\{e\in\overline{T}\;|\; e^2=e\}$ and
$E_1(\overline{T})=\{e\in\overline{T}\;|\; e^2=e\;\text{and}\; dim(Te)=1\}$.
As usual, $W$ is the Weyl group of $G$ relative to $T$. The next three technical results
will allow us to find our way to the all-important Theorem \ref{order1vsorder2.thm}.

\begin{lemma}  \label{ordere1.lem}
Let $e\in E_1(\overline{T})$. Then
\[
\overline{eB}\setminus\{0\}=\bigcup_{\tau\in X}e\tau B
\]
where $X=\{\tau\in W\;|\;eB\tau^{-1}e\neq 0\}$.
\end{lemma}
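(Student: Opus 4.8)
The plan is to understand the set $\overline{eB}$ by decomposing $B$ into $T$-orbits via the Bruhat-type action and tracking how multiplication by $e$ on the left interacts with the structure of $\overline{T}$. Since $e \in E_1(\overline{T})$ satisfies $\dim(Te) = 1$, the orbit $Te$ is a one-dimensional torus orbit in $\overline{T}$, and $e\overline{T}e \cong eTe \cup \{0\}$ is essentially a copy of $\mathbb{C}$. My first step would be to analyze $eB$ itself, using the fact that $B = TU$ where $U$ is the unipotent radical, and more usefully the affine cell decomposition of $G$ via the Bruhat decomposition $B = \bigsqcup_{\tau} (\text{cells})$ refined by the Weyl group. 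The key observation I expect to exploit is that for the closure, left multiplication by an idempotent $e$ collapses much of the group-theoretic structure, so that $e\tau B$ for various $\tau \in W$ should exhaust the nonzero part of $\overline{eB}$.

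Concretely, I would proceed as follows. First I would establish that $\overline{eB}$ is a union of $\overline{T}$-orbits (or at least $T \times B$-orbits under the natural action), using that $eB$ is $B$-stable on the right and $e$ absorbs left $T$-multiplication appropriately. Second, I would identify which $T$-orbit closures actually appear: an element $e\tau$ with $\tau \in W$ lies in $\overline{eB}$ precisely when the corresponding cell meets $eB$ nontrivially, and I would want to show this meeting is detected by the nonvanishing condition $eB\tau^{-1}e \neq 0$. The natural mechanism is that $e\tau \in \overline{eB}$ forces, upon right-multiplying by $\tau^{-1}$ and left-composing with $e$, a nonzero idempotent-supported element; conversely $eB\tau^{-1}e \neq 0$ guarantees the relevant one-parameter degeneration landing in the cell $e\tau B$. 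Third, I would verify the decomposition is a genuine union by checking that every point of $\overline{eB}\setminus\{0\}$ lies in some $e\tau B$, which amounts to a limit/valuation argument identifying the "leading term" of a curve approaching the boundary with some Weyl translate $e\tau$.

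The technical heart relies on standard facts about $\overline{T}$ as a normal torus embedding (Theorem 4.5/5.4 of \cite{R2}) and on the $B$-orbit structure inherited from $G$. I would want to phrase the nonvanishing test $eB\tau^{-1}e \neq 0$ as the assertion that the product, taken in the monoid $M$, does not collapse to the zero idempotent, which is exactly the condition that $\tau$ does not push $e$ off its supporting one-dimensional orbit. Since $\dim(Te) = 1$, the set $X$ will be a union of cosets controlled by the stabilizer structure of $e$ in $W$, and the parametrization by $X$ should be the cleanest way to enumerate the surviving cells.

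The hard part will be the converse inclusion, namely showing that nonvanishing of $eB\tau^{-1}e$ is \emph{sufficient} to place $e\tau B$ inside $\overline{eB}$ rather than merely consistent with it. This requires producing, for each such $\tau$, an explicit one-parameter family in $eB$ whose limit lands in $e\tau B$; the existence of such a degeneration is where the normality of $\overline{T}$ and the monoid multiplication must be combined carefully, since a naive dimension count does not by itself guarantee that the boundary cell is reached. I anticipate that controlling these limits, and ruling out spurious cells where the product vanishes, will be the main obstacle, whereas the forward inclusion and the $B$-stability bookkeeping should be comparatively routine.
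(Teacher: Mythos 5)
Your plan correctly identifies the two inclusions to be proved and correctly isolates the characterization $X=\{\tau\in W\mid e\tau\in\overline{eB}\}$ as the crux, but it has a genuine gap: the step you yourself flag as the main obstacle --- showing that $eB\tau^{-1}e\neq 0$ \emph{suffices} to place $e\tau$ in $\overline{eB}$ --- is left unproved, and the mechanism you propose for it (constructing one-parameter degenerations in $eB$ whose limits land in $e\tau B$, invoking normality of $\overline{T}$) is a misdiagnosis. No limit argument is needed. The point is purely algebraic: if $eb\tau^{-1}e\neq 0$ for some $b\in B$, set $x=eb\tau^{-1}e\tau$; then $x\neq 0$, $ex=x$ and $x\tau^{-1}e\tau=x$, so $x$ is a nonzero element of $eM\tau^{-1}e\tau$. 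Since $e$ and $\tau^{-1}e\tau$ both lie in $E_1$, the set $eM\tau^{-1}e\tau$ is one-dimensional and equals $\mathbb{C}^*e\tau\cup\{0\}$, so $e\tau\in\mathbb{C}^*x\subseteq eB\tau^{-1}e\tau$. Finally $B\tau^{-1}e\tau\subseteq B\overline{B}\subseteq\overline{B}$ because $\tau^{-1}e\tau\in\overline{T}\subseteq\overline{B}$, whence $eB\tau^{-1}e\tau\subseteq e\overline{B}\subseteq\overline{eB}$ and $e\tau\in\overline{eB}$ directly. There are no ``spurious cells'' to rule out by controlling limits.

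Your forward inclusion is also thinner than it needs to be. The containment $\overline{eB}\setminus\{0\}\subseteq\bigcup_{\tau\in W}e\tau B$ does not come from a valuation argument on curves; it comes from writing $e=\gamma e_1\gamma^{-1}$ for the distinguished rank-one idempotent $e_1$ with $e_1B=e_1Be_1=\mathbb{C}^*e_1$, so that the Bruhat decomposition gives $e_1G=\bigcup_{w}e_1BwB=\bigcup_{w}e_1wB$ and hence $eG=\bigcup_{\tau}e\tau B$, combined with $\overline{eB}\setminus\{0\}\subseteq eG$. The identity $e_1Be_1=\mathbb{C}^*e_1$ is the essential input that your sketch never isolates; without it the claim that finitely many sets $e\tau B$ exhaust $\overline{eB}\setminus\{0\}$ does not follow. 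The necessity direction ($e\tau\in\overline{eB}\Rightarrow eB\tau^{-1}e\neq 0$) you describe is essentially right and is indeed the routine part: $0\neq e\tau=e\tau\cdot\tau^{-1}e\tau\in\overline{eB}\,\tau^{-1}e\tau$ forces $eB\tau^{-1}e\tau\neq 0$.
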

\begin{proof}
We first show that $\overline{eB}\backslash\{0\}\subseteq\bigcup_{\tau\in W}e\tau B$. To
this end, first recall $e_1\in E_1(\overline{T})$, the unique rank-one idempotent such
that $e_1B=e_1Be_1$. Then $e_1G=\bigsqcup_{w\in W}e_1BwB=\cup_{w\in W}e_1wB$, since
$e_1B=e_1Be_1=\mathbb{C}^*e_1$. Thus, if $e=\gamma e_1\gamma^{-1}\in E_1$, one sees that
\[
eG=\gamma e_1G=\bigcup_{w\in W}\gamma e_1BwB=\bigcup_{w\in W}\gamma e_1 wB=\bigcup_{\tau\in W}e\tau B.
\]
Hence $\overline{eB}\setminus\{0\}\subseteq eG\subseteq\bigcup_{\tau\in W}e\tau B$.

Thus it suffices to show that $X=\{\tau\in W\;|\; e\tau\in\overline{eB}\}$.
Suppose then, that $e\tau\in\overline{eB}$. Then
$0\neq e\tau\tau^{-1}e\tau\in\overline{eB}\tau^{-1}e\tau$. Thus
$eB\tau^{-1}e\tau\neq 0$.
Conversely, suppose that $eB\tau^{-1}e\tau\neq 0$. Then there exists $b\in B$
such that $0\neq x=eb\tau^{-1}e\tau$. Then
$0\neq x=ex=x\tau^{-1}e\tau\in eB\tau^{-1}e\tau$. Thus
$e\tau\in \mathbb{C}^*e\tau\subseteq eM\tau^{-1}e\tau=eB\tau^{-1}e\tau\subseteq\overline{eB}$
since $B\tau^{-1}e\tau\subseteq\overline{B}$ .
\end{proof}

\begin{corollary} \label{glop.cor}
Let $e\in E_1(\overline{T})$ and let $f\in E(\overline{T})$. Then
\[
\overline{eB}f=\{0\}\cup(\bigcup_{\tau\in X}e\tau Bf).
\]
\end{corollary}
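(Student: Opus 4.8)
The plan is to deduce Corollary 2.8 directly from Lemma 2.6 by multiplying the computed decomposition of $\overline{eB}\setminus\{0\}$ on the right by the idempotent $f$. Lemma 2.6 gives
\[
\overline{eB}\setminus\{0\}=\bigcup_{\tau\in X}e\tau B,
\qquad X=\{\tau\in W\;|\;eB\tau^{-1}e\neq 0\}.
\]
First I would observe that $\overline{eB}=(\overline{eB}\setminus\{0\})\cup\{0\}$, since $0\in\overline{eB}$ (the closure of $eB$ in $M$ contains $0$ because $0\in\overline{T}$ absorbs under multiplication and $e\cdot 0=0$ lies in the closure). Right-multiplication by a fixed element $f$ is a morphism, and in particular it is a set map that commutes with taking unions, so
\[
\overline{eB}f=\Big((\overline{eB}\setminus\{0\})\cup\{0\}\Big)f
=\Big(\bigcup_{\tau\in X}e\tau B\Big)f\;\cup\;\{0f\}
=\Big(\bigcup_{\tau\in X}e\tau Bf\Big)\cup\{0\}.
\]
This is exactly the asserted formula, so at the level of raw set algebra the corollary is essentially immediate from the lemma.

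The one genuine point that needs care is to confirm that $0$ really does belong to $\overline{eB}f$ and that no spurious extra elements are introduced, i.e. that $\{0\}$ is correctly accounted for rather than silently absorbed into one of the sets $e\tau Bf$. Since $0=0f$ and $0\in\overline{eB}$, we certainly have $0\in\overline{eB}f$, which justifies the explicit $\{0\}$ on the right-hand side. The sets $e\tau Bf$ may or may not themselves contain $0$, but that is harmless: the union with $\{0\}$ is stated so as to guarantee $0$ appears regardless. Thus the decomposition holds as an equality of subsets of $M$.

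The main obstacle, such as it is, is purely bookkeeping: one must be sure that the map $x\mapsto xf$ distributes over the (possibly infinite or overlapping) union appearing in Lemma 2.6 and that the closure point $0$ is handled explicitly. There is no hard algebraic geometry here beyond what Lemma 2.6 already supplies; the corollary is a formal consequence obtained by applying right translation by $f$ to both sides. Accordingly I expect the proof to consist of little more than citing Lemma 2.6, noting $0\in\overline{eB}$, and distributing the right-multiplication by $f$ across the union.
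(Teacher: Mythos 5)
Your argument is correct and is exactly the intended one: the paper states this as an immediate corollary of Lemma \ref{ordere1.lem} and offers no separate proof, since right-multiplying the decomposition $\overline{eB}=\{0\}\cup\bigcup_{\tau\in X}e\tau B$ by $f$ and noting $0=0f\in\overline{eB}f$ gives the formula directly. Your explicit bookkeeping about the point $0$ is a reasonable elaboration of what the paper leaves tacit.
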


\begin{proposition}\label{whatise.prop}
The following are equivalent.
\begin{enumerate}
\item $ef=e$, and for all $\tau\in X$ with $\tau^{-1}e\tau\neq e$, $e\tau Bf=0$.
\item $eBf=eBe$.
\end{enumerate}
\end{proposition}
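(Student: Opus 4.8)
The plan is to prove the equivalence of the two conditions by carefully unpacking what each one says about the product $\overline{eB}f$ and using Corollary~\ref{glop.cor}. Before starting I would record the key structural facts I will lean on. Since $e\in E_1(\overline{T})$, the idempotent $e$ has rank one, so $eM e = e\overline{T}e = \mathbb{C}^* e \cup \{0\}$ is essentially a line; in particular $e\tau Be$ is either $0$ or $\mathbb{C}^*e$ for each $\tau$. Also $\tau^{-1}e\tau\in E_1(\overline{T})$ for every $\tau\in W$, so I can always compare $\tau^{-1}e\tau$ with $e$. The set $X=\{\tau\in W\mid eB\tau^{-1}e\neq 0\}=\{\tau\in W\mid e\tau\in\overline{eB}\}$ is exactly the index set produced by Lemma~\ref{ordere1.lem}, so that $\overline{eB}\setminus\{0\}=\bigcup_{\tau\in X}e\tau B$, and multiplying on the right by $f$ gives $\overline{eB}f=\{0\}\cup\bigcup_{\tau\in X}e\tau Bf$ as in the corollary.

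The natural direction to do first is \emph{(1) implies (2)}. Assuming $ef=e$ and that $e\tau Bf=0$ whenever $\tau^{-1}e\tau\neq e$, I would split the union $\overline{eB}f=\{0\}\cup\bigcup_{\tau\in X}e\tau Bf$ according to whether $\tau^{-1}e\tau=e$ or not. By hypothesis the terms with $\tau^{-1}e\tau\neq e$ all vanish, so only the terms with $\tau^{-1}e\tau=e$ survive. For such $\tau$ one has $e\tau=\tau(\tau^{-1}e\tau)=\tau e$ in the relevant sense, i.e. $e$ commutes with $\tau$, so $e\tau B=eB$ (here I expect to use that $\tau\in X$ forces $e\tau$ to lie in $\overline{eB}$ and that $\tau$ centralizes $e$). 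Hence every surviving piece $e\tau Bf$ is contained in $eBf$, giving $\overline{eB}f\subseteq eBf\cup\{0\}$. Combined with $ef=e$, which guarantees $eBf\supseteq eBef\ni \cdots$, I would deduce $eBf=eBe$: the inclusion $eBe\subseteq eBf$ is immediate from $ef=e$ (so $eBe=eBef\subseteq eBf$), and the reverse inclusion follows because each factor $e\tau Bf$ with $\tau$ centralizing $e$ produces elements of $eBe$.

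For the converse \emph{(2) implies (1)}, I would start from $eBf=eBe$. Taking the identity in $B$ gives $ef\in eBf=eBe\subseteq eMe=\mathbb{C}^*e\cup\{0\}$, so $ef\in\mathbb{C}^*e\cup\{0\}$; since $ef$ is an idempotent-like product with $e\cdot ef=ef$, I would argue $ef=e$ (the value $0$ is excluded because $e\in eBf=eBe$ forces a nonzero element, and rank-one forces the scalar to be $1$). To obtain the second part of (1), I would take $\tau\in X$ with $\tau^{-1}e\tau\neq e$ and show $e\tau Bf=0$. Here $e\tau Bf\subseteq \overline{eB}f$, and I would multiply by $e$ on the right: $e\tau Bfe\subseteq (eBf)e=(eBe)e=eBe$, while on the other hand $e\tau Be$ detects whether $\tau$ centralizes $e$ — since $\tau^{-1}e\tau\neq e$ one gets $e\tau Be=0$, and I would leverage $ef=e$ to convert $e\tau Bf$ into $e\tau Be$ and conclude $e\tau Bf=0$.

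The main obstacle I anticipate is the bookkeeping in the step identifying which $\tau\in X$ genuinely centralize $e$ and showing that the non-centralizing terms are forced to die. The delicate point is that $e\tau Bf=0$ is an assertion about a whole double-coset-type product, and reducing it to the cleaner statement $e\tau Be=0$ requires using $ef=e$ to replace $f$ by $e$ inside the product without losing information — this is exactly where rank-one-ness of $e$ (so that $e\overline{T}e=\mathbb{C}^* e\cup\{0\}$) does the heavy lifting. Getting the equivalence $\tau^{-1}e\tau\neq e \iff e\tau Be=0$ cleanly, and making sure the scalar in $ef\in\mathbb{C}^*e$ is really $1$ rather than merely nonzero, are the spots where I would be most careful.
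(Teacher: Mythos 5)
In the direction \emph{(1) implies (2)} your reduction of $\overline{eB}f$ to $\{0\}\cup eBf$ via Corollary~\ref{glop.cor} and the observation that centralizing $\tau$ contribute nothing new both match the paper. But the crux of this direction --- the reverse inclusion $eBf\subseteq eBe$ --- is exactly the step you do not prove. Saying that ``each factor $e\tau Bf$ with $\tau$ centralizing $e$ produces elements of $eBe$'' is circular: for centralizing $\tau$ that factor is just $eBf$ again, and the whole content of the proposition is that this set is no larger than the line $eBe=\mathbb{C}^*e$. No formal manipulation with $ef=e$ yields this; a genuine geometric input is needed. The paper rewrites $\{0\}\cup eBf=\{0\}\cup efBf=\{0\}\cup eC_B(f)$ (part (ii) of Corollary 7.2 of \cite{P}), observes that this is a closed affine set which is the union of exactly two orbits of the connected solvable group $C_B(f)$, and then invokes Theorem 3.1 of \cite{Gr} (the complement $\{0\}$ of the open orbit being the zero set of a single regular function) to conclude $\dim(eBf)=1$, whence $eBe=eBf$. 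Your proposal has no substitute for this dimension count.

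In the converse direction your derivation of $ef=e$ is essentially correct (cleanest: $e\in eBe=eBf$ gives $e=ebf$, so $ef=ebf^2=ebf=e$), but the second half fails. The claim that $\tau^{-1}e\tau\neq e$ forces $e\tau Be=0$ is false: writing $e''=\tau^{-1}e\tau$ one has $e\tau Be=\tau\,(e''Be)$, and $e''Be\neq 0$ with $e''\neq e$ is precisely the relation $e''<e$ of Definition~\ref{theorderone1.def} --- the order on $E_1$ exists exactly because non-centralizing conjugates can satisfy $e''Be\neq 0$. (Excluding this for $\tau\in X$ would require antisymmetry of $<$, which is not available at this point.) Moreover, even granting $e\tau Be=0$, right multiplication by $e$ only controls $e\tau Bfe$, and $e\tau Bfe=0$ does not imply $e\tau Bf=0$. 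The paper's route avoids both problems: under \emph{(2)} one has $\overline{eB}f\setminus\{0\}=eBe=\mathbb{C}^*e$, so a nonzero $e\tau Bf$ must contain $e$ itself; then $e\in e\tau BfB\subseteq\overline{e\tau B}$ together with $e\tau B\subseteq\overline{eB}$ forces $e\tau B=eB$, hence $e\tau=e$ and finally $\tau^{-1}e\tau=e$. You would need to repair both halves along these lines.
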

\begin{proof}
Assume {\em 1}. Then, by Corollary~\ref{glop.cor},
\[
\overline{eB}f=\{0\}\cup(\bigcup_{\tau\in X}e\tau Bf).
\]
But, by assumption, $e\tau Bf=0$ whenever $\tau^{-1}e\tau\neq e$. Hence
$\overline{eB}f=\{0\}\cup(\bigcup_{\tau\in Z}e\tau Bf)$,
where $Z=\{\tau\in X\;|\; \tau^{-1}e\tau = e\}$. However, if $\tau^{-1}e\tau = e$
then $e\tau = e$. Thus $\overline{eB}f=\{0\}\cup eBf=\{0\}\cup efBf$,
and this a closed subset of $M$.
Using part(ii) of Corollary 7.2 of \cite{P}, we get $efBf=eC_B(f)$. Thus
$\overline{eB}f=eC_B(f)\cup\{0\}$, and
hence $\overline{eB}f$ is the union of two right $C_B(f)$-orbits, $eC_B(f)$ and $\{0\}$.
By part (i) of Theorem 6.16 of \cite{P}, $C_B(e)$ is a connected group. But it
is also a solvable group. Thus, by Theorem 3.1 of \cite{Gr},
$dim(eBf)=1$ since there exists $h\in \mathbb{C}[\overline{eB}f]$ such that $\{0\}=h^{-1}(0)$.
Since $eBe\subseteq eBf$, it follows that $eBe=eBf$.

Conversely, assume {\em 2}. Thus $\overline{eB}f=\overline{eBe}=\{0\}\cup eBe$. But from
Lemma~\ref{ordere1.lem} $\overline{eB}f=\{0\}\cup(\bigcup_{\tau\in X}e\tau Bf)$. Assume that
$e\tau Bf\neq 0$. Then we have
\[
\emptyset\neq e\tau Bf\backslash\{0\}\subseteq\overline{eBe}\backslash\{0\}=eBe=\mathbb{C}^*e.
\]
Thus,
\[
e\in e\tau Bf\subseteq e\tau BfB\subseteq\overline{e\tau B}
\]
since $BfB\subseteq\overline{B}$. But $e\tau B\subseteq\overline{eB}$ and thus
$e\tau B=eB$. Hence $e\tau=e$ and finally $\tau^{-1}e\tau=e$
\end{proof}

\begin{definition}    \label{theorderone1.def}
Let $e,e'\in E_1(\overline{T})$. We say that $e<e'$ if $eBe'\neq 0$ and
$e\neq e'$.
\end{definition}

We shall see in Proposition~\ref{geqvsbeg.prop} that $e<e'$ if and only if
$\overline{BeG}\subsetneq\overline{Be'G}$. Then, in Theorem \ref{order1vsorder2.thm},
we relate this to the Bruhat ordering on $W^J$, where $W_J$ is the centralizer in
$W$ of $e_1$.

\begin{theorem}   \label{orderone1.thm}
Let $e\in E_1$ and let $f\in E$. The following are equivalent.
\begin{enumerate}
\item $eBf=eBe$.
\item
 \begin{enumerate}
  \item $ef=e$.
  \item If $e<e'$ then $e'Bf=0$.
 \end{enumerate}
\item
  \begin{enumerate}
  \item $ef=e$.
  \item If $e<e'$ then $e'f=0$.
 \end{enumerate}
\end{enumerate}
\end{theorem}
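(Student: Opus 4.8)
The plan is to prove $1 \Leftrightarrow 2$ and $2 \Leftrightarrow 3$ separately, obtaining $1 \Leftrightarrow 2$ essentially for free from Proposition~\ref{whatise.prop} and spending the real effort on $3 \Rightarrow 2$. For $1 \Leftrightarrow 2$ I would set up the dictionary $\tau \leftrightarrow e' := \tau^{-1}e\tau$ between $W$ and $E_1$; this is legitimate because, as already used in the proof of Lemma~\ref{ordere1.lem}, the monoids at hand are $\mathscr{J}$-irreducible, so by Theorem~\ref{jirred.thm}(3) the set $E_1$ is a single $W$-orbit and every $e'$ with $e<e'$ is of this form. Choosing a representative of $\tau$ in $G$, right multiplication by the invertible $\tau$ gives $eBe' = eB\tau^{-1}e\tau$, so $\tau \in X$ precisely when $eBe' \neq 0$; similarly $e\tau Bf = 0$ iff $e'Bf = \tau^{-1}(e\tau Bf) = 0$, and $\tau^{-1}e\tau \neq e$ iff $e \neq e'$. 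Under this translation, condition 1 of Proposition~\ref{whatise.prop} reads exactly ``$ef = e$, and $e'Bf = 0$ for every $e' \in E_1$ with $e < e'$'', i.e.\ statement 2, while condition 2 of the Proposition is statement 1. Hence $1 \Leftrightarrow 2$.

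The implication $2 \Rightarrow 3$ is immediate. Given $e < e'$, statement 2 yields $e'Bf = \{0\}$; since $1 \in B$ we have $e'f = e'\cdot 1\cdot f \in e'Bf$, so $e'f = 0$.

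The content is therefore $3 \Rightarrow 2$, where the idempotent-level vanishing $e'f = 0$ must be promoted to the Borel-level vanishing $e'Bf = 0$. Writing $B = TU$ with $U$ the unipotent radical and using $e' \in \overline{T}$, so that $e'tuf = te'uf$ for $t \in T$ and $u \in U$, reduces the claim to $e'Uf = 0$. I would argue contrapositively: if $e'uf \neq 0$ for some $e'$ with $e < e'$ and some $u \in U$, pass to the finite representation $\rho$ of Theorem~\ref{jirred.thm}(2). There $e'$ is the projection onto a weight line $\mu'$ and $f$ onto a union of weight lines, and $e'uf \neq 0$ forces a weight $\mu$ in the support of $f$ with $\mu' - \mu$ a nonzero sum of positive roots; the rank-one idempotent $e''$ onto $\mu$ then satisfies $e''f = e'' \neq 0$. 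Likewise $e < e'$ says $\mu_e - \mu'$ is a sum of positive roots. Combining the two connections exhibits $eBe'' \neq 0$, i.e.\ $e < e''$ with $e''f \neq 0$, contradicting statement 3. Together with $2 \Rightarrow 3$ this gives $2 \Leftrightarrow 3$ and completes the equivalence.

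The main obstacle is precisely this combination step: deducing $eBe'' \neq 0$ from $eBe' \neq 0$ and $e'uf \neq 0$. Naively multiplying arbitrary Borel elements risks cancellation of matrix coefficients, so the safe route is to recognize, via $\rho$, that the order $<$ on $E_1$ is the Bruhat order on the single orbit $W\cdot\lambda$ of vertices of $\mathscr{P}_\lambda$; transitivity of that order then delivers $e < e''$ with no coefficient bookkeeping. I expect everything else to be formal manipulation of the identities in Lemma~\ref{ordere1.lem} and Corollary~\ref{glop.cor}, with this transitivity the only genuinely geometric input.
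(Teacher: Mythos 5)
Your reduction of $1\Leftrightarrow 2$ to Proposition~\ref{whatise.prop} via the dictionary $\tau\leftrightarrow e'=\tau^{-1}e\tau$, and your one-line $2\Rightarrow 3$, both match the paper and are fine. The gap is in $3\Rightarrow 2$, at the point where you produce ``the rank-one idempotent $e''$ onto $\mu$.'' An idempotent $f\in E(\overline{T})$ projects onto the sum of \emph{all} weight spaces whose weights lie on the corresponding face of $\mathscr{P}_\lambda$, not only the extreme ones; only the extreme weights $W\cdot\lambda$ (the vertices) correspond to elements of $E_1(\overline{T})$, and the non-extreme weight spaces need not even be lines. So the weight $\mu$ in the support of $f$ witnessing $e'uf\neq 0$ need not be a vertex, in which case there is no $e''\in E_1$ with image $V_\mu$, statement 3 says nothing about it, and the relation $e'<e''$ that your transitivity step needs is unavailable. (If $\mu$ \emph{is} extreme your plan does close: $e'ue''\neq 0$ gives $e'Be''\neq 0$, hence $e'<e''$ or $e'=e''$, and transitivity of $<$ --- which follows non-circularly from Proposition~\ref{geqvsbeg.prop}, since containment of orbit closures is transitive --- yields $e<e''$ with $e''f=e''\neq 0$, contradicting 3. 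But the non-extreme case is not addressed, and I do not see how to force the witness to be extreme.)

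The paper's proof of $3\Rightarrow 1$ avoids weights entirely by a centralizer trick that is the missing idea here. Since $ef=e$, one has $eBf=efBf$, and by part (ii) of Corollary 7.2 of \cite{P} this equals $e\,C f$ with $C=C_B(f)$, so $\overline{eB}f=e\overline{C}f$. Decomposing $e\overline{C}f\setminus\{0\}=\bigcup_{\gamma\in Y}e\gamma Cf$ as in Lemma~\ref{ordere1.lem} (with $C$ in place of $B$), each piece satisfies $e\gamma Cf=e\gamma fC$ because $C$ centralizes $f$, which converts the Borel-level vanishing you are after into the idempotent-level vanishing $e\gamma f=\gamma(\gamma^{-1}e\gamma)f=0$ that hypothesis 3 supplies directly. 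One then concludes $eBf=\{0\}\cup eCf$ is one-dimensional as in Proposition~\ref{whatise.prop}, giving statement 1 (the paper proves $3\Rightarrow 1$ rather than $3\Rightarrow 2$, but either closes the cycle). If you want to keep your representation-theoretic route, you must either show the witnessing weight can be taken extreme or replace that step with an argument of this kind.
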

\begin{proof}
The equivalence of {\em 1} and {\em 2} is a reformulation of
Proposition~\ref{whatise.prop},
taking into account Definition~\ref{theorderone1.def}. That {\em 2} implies
{\em 3} is obvious. So we assume {\em 3} and then deduce {\em 1}.
By Lemma~\ref{ordere1.lem}
\[
\overline{eB}\backslash\{0\}=\bigcup_{\tau\in X}e\tau B
\]
where $X=\{\tau\in W\;|\;eB\tau^{-1}e\neq 0\}$. Now $ef=e$ so that $eBf=efBf$. Thus
$\overline{eB}f = e\overline{fBf} = e\overline{C}f$, where $C=C_B(f)$.
(again using part(ii) of Corollary 7.2 of \cite{P}) Thus, again by
Proposition~\ref{whatise.prop},
\[
e\overline{C}f\backslash\{0\}=\bigcup_{\gamma\in Y}e\gamma Cf
\]
where $Y=\{\gamma\in W\;|\;eC\gamma^{-1}e\gamma\neq 0\}$. But if $eCe'\neq 0$ then
$eBe'\neq 0$ and then, by assumption, $e'f=0$ as long as $e'\neq e$. Hence $e\gamma f=0$
if $\gamma^{-1}e\gamma\neq e$, and thus $e\gamma Cf=e\gamma fC=0$ for $\gamma\in Y$.
Thus $eBf=\{0\}\cup eCf$, which (as in the proof of Proposition~\ref{whatise.prop})
is one-dimensional. Thus $eBf=eBe$.
\end{proof}

Notice how Theorem \ref{orderone1.thm} allows us to describe the relationship of $B$
and $E$ in terms of the ordering $<$ on $E_1$.

\begin{definition}      \label{bbforeoftbar.def}
Let $e\in E_1$. Define
\[
\mathscr{C}_e=\{f\in E(\overline{T})\;|\; fe=e\;\text{and}\;
fe'=0\;\text{for all}\; e'>e\}.
\]
\end{definition}
\noindent From Theorem~\ref{orderone1.thm}
\[
E(\overline{T})\setminus\{0\} = \bigsqcup_{e\in E_1}\mathscr{C}_e.
\]
The reader is encouraged to think of $\mathscr{C}_e\subseteq E(\overline{T})\setminus\{0\}$
as the combinatorial analogue of a $BB$-cell \cite{BB}.

We recall now the {\bf Gauss-Jordan} elements of $M$. First let
$\mathscr{R}=\{x\in M\;|\; Tx = xT\}/T$. By the results of \cite{R0},
$\mathscr{R}$ is a finite inverse monoid. Furthermore, there is a disjoint union
decomposition
\[
M=\bigsqcup_{r\in\mathscr{R}}BrB.
\]
This {\bf monoid Bruhat decomposition} is discussed in detail in Chapter 8
of \cite{R2}. It results in a perfect analogue, for reductive monoids,
of the much-studied Bruhat decomposition of algebraic groups.

\begin{definition}   \label{gaussjordan.def}
The set of {\em Gauss-Jordan} elements of $\mathscr{R}$ is defined to be
\[
GJ=\{r\in\mathscr{R}\;|\;rB\subseteq Br\}.
\]
\end{definition}

\noindent The following crucial properties of $GJ$ are discussed in \cite{R0}.
\begin{enumerate}
	\item $GJ\cdot W=\mathscr{R}$.
	\item For each $x\in\mathscr{R}$, $GJ\cap xW$ is a singleton.
	\item $GJ$ is a submonoid of $\mathscr{R}$.
	\item $M=\bigsqcup_{r\in GJ}BrG$.
\end{enumerate}
The reader should think of the set of Gauss-Jordan elements of
$\mathscr{R}$ as providing a combinatorial structure to the
(generalized) Gauss-Jordan column-reduction algorithm. If $M$ is the
reductive monoid of $n\times n$ matrices then one can check that,
(relative to $T$ and $B$ the diagonal and upper-triangular matrices, respectively)
$GJ$ can be identified with the set of $0-1$ matrices, in reduced column
echelon form, with at most one non zero entry in each row and column.
See Section 8.3 of \cite{R2} for a detailed discussion of Gauss-Jordan
elements for reductive monoids.

\begin{proposition}  \label{gjorder.prop}
The following are equivalent for $r, s\in GJ$.
\begin{enumerate}
\item $BrG\subseteq\overline{BsG}$.
\item $Br\subseteq\overline{Bs}$.
\end{enumerate}
\end{proposition}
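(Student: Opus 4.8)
The plan is to reduce everything to the Bruhat--Chevalley order on $\mathscr{R}$ together with one clean geometric fact. First note that since $r,s\in GJ$ we have $rB\subseteq Br$, hence $Br=BrB$ and likewise $Bs=BsB$; thus $\overline{Bs}=\overline{BsB}$ is stable under left and right multiplication by $B$, while $\overline{BsG}$ is stable under left multiplication by $B$ and right multiplication by $G$. The implication $(2)\Rightarrow(1)$ is then immediate: if $Br\subseteq\overline{Bs}$ then $BrG\subseteq\overline{Bs}\,G$, and for each $g\in G$ one has $\overline{Bs}\,g=\overline{Bsg}\subseteq\overline{BsG}$, so $BrG\subseteq\overline{BsG}$.

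For the reverse implication I would first establish the key equality
\[
\overline{BsG}=\overline{Bs}\cdot G .
\]
The inclusion $\overline{Bs}\cdot G\subseteq\overline{BsG}$ is the computation just made. For the opposite inclusion it suffices to prove that $Z\cdot G$ is closed, where $Z=\overline{Bs}$ is closed and right $B$-stable. Here I would exploit completeness of the flag variety: membership $m\in Zg$ depends only on the coset $Bg$, because $ZB=Z$ gives $Z(bg)=Zg$. Hence $\widetilde Z=\{(m,Bg)\in M\times(B\backslash G)\mid mg^{-1}\in Z\}$ is a well-defined closed subset of $M\times(B\backslash G)$, being the descent of the preimage of $Z$ under $(m,g)\mapsto mg^{-1}$. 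Since $B\backslash G\cong G/B$ is complete, the projection $M\times(B\backslash G)\to M$ is proper, so the image $Z\cdot G$ of the closed set $\widetilde Z$ is closed. This yields $\overline{BsG}=\overline{Bs}\cdot G$, and I regard this properness argument as the technical heart of the proof.

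Now assume $(1)$, i.e. $BrG\subseteq\overline{BsG}=\overline{Bs}\cdot G$. Writing $\overline{Bs}=\overline{BsB}=\bigsqcup_{t\le s}BtB$ in the Bruhat--Chevalley order on $\mathscr{R}$ (Chapter 8 of \cite{R2}), we get $\overline{Bs}\cdot G=\bigcup_{t\le s}BtG$. In particular $r\in BtG$ for some $t\le s$. By property $4$ the sets $BqG$, $q\in GJ$, partition $M$, so $BtG=BrG$; equivalently $t=rw$ in $\mathscr{R}$ for some $w\in W$. It remains to see that $r\le t=rw$, after which $r\le s$ follows by transitivity and then $Br=BrB\subseteq\overline{BsB}=\overline{Bs}$, which is $(2)$.

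The remaining point, $r\le rw$ for all $w\in W$ when $r\in GJ$, is where I expect the only real difficulty, and it is a statement purely about the Bruhat--Chevalley--Renner order. The Gauss--Jordan element $r$ is the element of minimal length in its coset $rW$ (this is the content of property $2$ together with the characterization $rB\subseteq Br$), and for this order the minimal length element of such a coset is its unique minimum, exactly as for minimal coset representatives in a Coxeter group. I would cite this from \cite{R0} together with the graded, subword description of the order in \cite{R2}; alternatively it can be extracted directly from the exchange properties of that order. This combinatorial input, rather than the geometry, is the step to be handled with care.
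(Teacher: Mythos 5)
Your treatment of the implication $(2)\Rightarrow(1)$ and of the key equality $\overline{BsG}=\overline{Bs}\cdot G$ is sound and matches the substance of the paper, which invokes a result of Steinberg at the point where you run the properness argument over $B\backslash G$ by hand. After that, however, you take a different route (through the decomposition $\overline{Bs}=\bigsqcup_{t\leq s}BtB$ and the partition $M=\bigsqcup_{q\in GJ}BqG$), and the step you yourself flag --- that $r\leq rw$ for all $w\in W$ when $r\in GJ$ --- is a genuine gap as written. The statement is true, but ``cite it from \cite{R0} together with the subword description'' is not a proof, and the analogy with minimal coset representatives in a Coxeter group is exactly the kind of reasoning that is dangerous for the Bruhat--Chevalley--Renner order: within a $W\times W$-orbit that order exhibits systematic reversals (compare Theorem \ref{gjorder.thm} of this paper, where $r=ve_1\leq s=we_1$ corresponds to $w\leq v$, not $v\leq w$), so ``minimal length in the coset implies minimum in the order'' cannot be waved through by analogy and would need its own argument.

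The gap is easily closed, and the paper's own proof shows how: since $r\in GJ$ we have $BrB=Br$, hence $BrwB=BrBwB$; since $1\in\overline{BwB}$ and multiplication is continuous, $BrB=BrB\cdot 1\subseteq BrB\cdot\overline{BwB}\subseteq\overline{BrB\cdot BwB}=\overline{BrwB}$, which is precisely $r\leq rw$. In fact the paper bypasses your combinatorial detour entirely: from $BrG\subseteq\overline{Bs}\,G$ it writes $r=yg^{-1}$ with $y\in\overline{Bs}$ and $g\in G$, so $rg\in\overline{Bs}$ and $BrgB=BrBgB=BrBwB\subseteq\overline{Bs}$ for the $w$ with $g\in BwB$, and then the displayed one-liner gives $BrB\subseteq\overline{BrBwB}\subseteq\overline{Bs}$. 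I recommend you either adopt that direct argument or insert the one-line geometric proof of $r\leq rw$ in place of the citation; as it stands the crux of your proof rests on an unverified combinatorial assertion.
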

\begin{proof}
The case ``{\em 2} implies {\em 1}" is clear. To prove ``{\em 1} implies {\em 2}"
we shall use the fact that $B\backslash G$ is a complete variety.
Since $s\in GJ$ we have that $BsB=Bs$. Thus $\overline{Bs}B=\overline{Bs}$. But
then, by a result of Steinberg, $\overline{Bs}G=\overline{BsG}$
since $B\backslash G$ is a complete variety. Thus the assumption of {\em 1}
is equivalent to saying that  $BrG\subseteq\overline{Bs}G$.
Thus we can write $r=yg^{-1}$ where $y\in\overline{Bs}$ and $g\in G$.
Hence $rg\in\overline{Bs}$. Thus $BrgB\subseteq\overline{Bs}$. But
$BrgB=BrBgB=BrBwB$ for some $w\in W$. But $1\in\overline{BwB}$, and consequently
$BrB\subseteq\overline{BrBwB}$. We conclude that $BrB\subseteq\overline{Bs}$.
\end{proof}

\noindent Recall that, for $J\subseteq S$,
\[
W^J=\{t\in W\;|\; t\;\text{has minimal length in}\; tW_J\}.
\]
Define also
\[
^JW=\{t\in W\;|\; t\;\text{has minimal length in}\; W_Jt\}.
\]
These will be required in the proof of the following theorem.

\begin{theorem}  \label{gjorder.thm}
Let $r=ve_1,s=we_1$ where $v,w\in W^J$. The following are equivalent.
\begin{enumerate}
 \item $r\leq s$ (i.e. $BrB\subset\overline{BsB}$).
 \item $w\leq v$ (i.e. $BwB\subset\overline{BvB}$).
\end{enumerate}
\end{theorem}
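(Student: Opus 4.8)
The plan is to reduce the monoid comparison to a comparison of Schubert cells in a flag variety, exploiting that both $r=ve_1$ and $s=we_1$ are rank-one Gauss--Jordan elements lying in the single minimal $G\times G$-orbit $Ge_1G$. First I would record that $r,s\in GJ$: since $e_1B=\mathbb{C}^*e_1$, one has $ve_1B=\mathbb{C}^*ve_1\subseteq Bve_1$, so $BrB=Br$ and likewise $BsB=Bs$. Consequently condition \emph{1} of the theorem, $BrB\subseteq\overline{BsB}$, is literally $Br\subseteq\overline{Bs}$, and by Proposition~\ref{gjorder.prop} this is equivalent to the $B\times G$-orbit containment $BrG\subseteq\overline{BsG}$. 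This is the step that frees us from the torus closure and lets us work with honest $G$-orbits.

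Next I would identify $Ge_1G/G$ with a flag variety. Every element of $e_1G$ is a rank-one operator with image the line $L=e_1(V)$, and conversely every rank-one operator with image $L$ lies in $e_1G$; hence the right $G$-orbit of $x\in Ge_1G$ is determined by its image line, and the orbit morphism $\pi\colon Ge_1G\to Ge_1G/G$ is identified with $ge_1\mapsto gL$, giving $Ge_1G/G\cong G/P$ where $P=\{g\in G\mid gL=L\}$ is the parabolic of type $J$. The essential point — and the source of the order reversal in the statement — is that $L$ is $B^-$-stable (because $B^-e_1=\mathbb{C}^*e_1$), so $P\supseteq B^-$. Under $\pi$ the orbit $BrG=Bve_1G$ maps to the $B$-orbit $BvP/P$, and since $C_W(e_1)=W_J$ one checks that $ve_1\mapsto vP$ restricts to a bijection of $\{ve_1\mid v\in W^J\}$ with the $T$-fixed points of $G/P$; here the minimal-length representatives $W^J$ make the indexing canonical (the dual set ${}^JW$ enters if one instead analyzes the cells $e_1\tau B$ on the right via Lemma~\ref{ordere1.lem}).

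I would then transport the containment across $\pi$ in both directions. Because $BrG$ consists of rank-one elements, $BrG\subseteq\overline{BsG}$ is equivalent to containment in the closure of $BsG$ taken inside the locally closed orbit $Ge_1G$. Since $BsG$ is right-$G$-stable and $\pi$ is an open surjective orbit morphism, one has $\pi(\overline{BsG})=\overline{\pi(BsG)}$ and $\pi^{-1}(\pi(\overline{BsG}))=\overline{BsG}$ (relative to $Ge_1G$); together these give
\[
BrG\subseteq\overline{BsG}\iff BvP/P\subseteq\overline{BwP/P}.
\]
Finally, since $P\supseteq B^-$, the $B$-orbits $BvP/P$ are the \emph{opposite} Schubert cells of $G/P$: the base point $eP$ lies in the open cell, so their closure order is the reverse of the usual one. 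Thus $BvP/P\subseteq\overline{BwP/P}$ holds exactly when $w\leq v$ in the Bruhat order on $W^J$, which is precisely the containment $BwB\subseteq\overline{BvB}$ of condition \emph{2}. Chaining the equivalences yields $r\leq s\iff w\leq v$.

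The step I expect to be the main obstacle is pinning down the \emph{direction} of the order: proving carefully that $P\supseteq B^-$, and hence that we are dealing with opposite Schubert cells whose closure order is reversed — getting this backwards would produce the false equivalence $r\le s\iff v\le w$. A sanity check on $SL_2$ with $J=\emptyset$ (where $G/P=\mathbb{P}^1$, the $B^-$-fixed point sits in the open $B$-cell, and $se_1\le e_1$ matches $e\le s$) confirms the reversal. A secondary technical point is the two-way transport of closures through the non-proper morphism $\pi$, which is why I rely on its being an open orbit morphism rather than on continuity alone, since continuity supplies only the forward implication.
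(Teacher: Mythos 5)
Your proof is correct, but it takes a genuinely different route from the paper's. The paper's proof is essentially a citation: it invokes the combinatorial description of the Bruhat--Chevalley order on the Renner monoid $\mathscr{R}$ (Corollary 8.35 of \cite{R2}, the normal-form criterion $a=y^{-1}ex\leq b=t^{-1}fu$ iff $ef=e$ and $x\leq wu$, $wt\leq y$ for some $w\in W_{I_1}W_{J_2}$), specializes it to $r=ve_1$, $s=we_1$ where the criterion collapses to $t\leq y$, and then inverts, using $({}^JW)^{-1}=W^J$, to get $w\leq v$; the order reversal is thus absorbed into the bookkeeping of left versus right coset representatives. You instead argue geometrically: after observing $r,s\in GJ$ so that $BrB=Br$, you use Proposition~\ref{gjorder.prop} to pass to $B\times G$-orbit closures, identify $Ge_1G/G$ with $G/P$ via the image line $L=e_1(V)$ (the key computation being $Pe_1\subseteq e_1G$, which follows from $B^-e_1=\mathbb{C}^*e_1$ and $W_J\subseteq C_W(e_1)$), and read off the equivalence from the closure order of the \emph{opposite} Schubert cells $BwP/P$, $P\supseteq B^-$. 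Your approach is self-contained modulo standard Schubert calculus and, unlike the paper's, makes the reason for the order reversal transparent (the base point sits in the open $B$-cell); the paper's is shorter but opaque without unwinding the cited corollary. Two small points to tighten: your parenthetical claim that \emph{every} rank-one operator with image $L$ lies in $e_1G$ is stronger than needed and not quite what you use --- what the argument requires is only that elements of $Ge_1G$ with image $L$ lie in $e_1G$, i.e. $Pe_1\subseteq e_1G$, which you should verify explicitly via the decomposition $P=B^-W_JB^-$; and the openness of $\pi$ deserves a one-line justification (e.g. $\pi$ is $G$-equivariant for the left action with homogeneous target, hence a fiber bundle $G\times^P\pi^{-1}(eP)\to G/P$). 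Neither is a gap.
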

\begin{proof}
We apply Corollary 8.35 of \cite{R2}. But we notice first that, in that setup,
$\Lambda$ is $\{e\in E(\overline{T})\;|\; Be=eBe\}$ while in the present
discussion, $\Lambda$ is $\{e\in E(\overline{T})\;|\; eB=eBe\}$. To eliminate
any potential confusion we shall first {\bf restate} Corollary 8.35 using
$\Lambda = \{e\in E(\overline{T})\;|\; eB=eBe\}$.

If $e,f\in\Lambda$ we write
\begin{enumerate}
\item[] $W_{I_1}=\{w\in W\;|\; we=ew=e\}\;\text{and}\;W_{I_2}=\{w\in W\;|\; we=ew\}$,
\end{enumerate}
and
\begin{enumerate}
\item[] $W_{J_1}=\{w\in W\;|\; wf=fw=f\}\;\text{and}\;W_{J_2}=\{w\in W\;|\; wf=fw\}$.
\end{enumerate}
Let $a, b\in\mathscr{R}$. Then $a=y^{-1}ex$ and $b=t^{-1}fu$ where $x\in^{I_1}W$, $y\in^{I_2}W$,
$u\in^{J_1}W$ and $t\in^{J_1}W$ (here $I_1, I_2, J_1, J_2\subseteq S$).
This is the {\bf normal form} for the elements of $\mathscr{R}$
as in Definition 8.34 of \cite{R2}.
Then (from Corollary 8.35 of \cite{R2}) the following are equivalent.
\begin{enumerate}
  \item[i)] $a\leq b$.
  \item[ii)] $ef=e$, and there exists $w\in W_{I_1}W_{J_2}$ such that
             $x\leq wu$ and $wt\leq y$.
\end{enumerate}
In our situation $W_{I_1}=W_{I_2}=W_{J_1}=W_{J_2}$, $x=u=1$ and $e=f=e_1$.
So condition $ii)$
becomes
\begin{enumerate}
  \item[ii)'] There exists $w\in W_{I_1}$ such that
             $1\leq w$ and $wt\leq y$.
\end{enumerate}
which is equivalent to
\begin{enumerate}
  \item[ii)''] $t\leq y$.
\end{enumerate}
since $t\leq wt$ for all $w\in W_{I_1}$.
Now observe that $t\leq y$ if and only if $t^{-1}\leq y^{-1}$,
while $(^IW)^{-1}=W^I$.
Thus the result follows with $v=y^{-1}$ and $w=t^{-1}$.
\end{proof}

Notice that this might appear counterintuitive. Think of $e_1$ as
``large as possible on the left" and that, multiplication
by some $w$ on the left makes the result smaller ``on the left".
Thus, if $w$ is less than $v$, then $ve_1$ is less than $we_1$.

\begin{proposition}  \label{geqvsbeg.prop}
The following are equivalent for $e,f\in E_1$.
\begin{enumerate}
  \item $e<e'$ (in the ordering of Definition~\ref{theorderone1.def} on $E_1$.).
  \item $BeG\subset\overline{Be'G}$.
\end{enumerate}
\end{proposition}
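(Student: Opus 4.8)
The plan is to translate both the combinatorial order of Definition~\ref{theorderone1.def} and the geometric order into the Bruhat order on $W^J$, where $W_J=C_W(e_1)$, and then match the two. First I would fix the parametrization. Since $M$ is $\mathscr{J}$-irreducible, $W$ acts transitively on $E_1$ (Theorem~\ref{jirred.thm}) with $W_J$ the stabilizer of $e_1$, so each element of $E_1$ is uniquely $ve_1v^{-1}$ with $v\in W^J$. Writing $e=ve_1v^{-1}$ and $e'=we_1w^{-1}$, the fact that $v,w$ are units gives $BeG=Bve_1G$ and $Be'G=Bwe_1G$, and since $e_1B=e_1Be_1=\mathbb{C}^*e_1$ one checks $ve_1B=Tve_1\subseteq Bve_1$, so $ve_1,we_1\in GJ$.

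Next I would handle the geometric side. By Proposition~\ref{gjorder.prop}, $BeG\subseteq\overline{Be'G}$ if and only if $Bve_1\subseteq\overline{Bwe_1}$; because $BrB=Br$ for Gauss-Jordan $r$, this is exactly the relation $ve_1\leq we_1$ of Theorem~\ref{gjorder.thm}, which is equivalent to $w\leq v$ in $W^J$. Distinct $v$ give distinct Gauss-Jordan elements $ve_1$ and hence disjoint orbits, so $BeG=Be'G$ iff $v=w$ iff $e=e'$; therefore $BeG\subsetneq\overline{Be'G}$ iff $w<v$. It then remains only to show that the combinatorial order matches, i.e. $e<e'\iff w<v$, and since both sides force $e\neq e'$ this reduces to proving $eBe'\neq 0\iff w\leq v$.

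The easy implication $w\leq v\Rightarrow eBe'\neq 0$ I would obtain directly: if $w\leq v$ then $ve_1\in\overline{Bwe_1}$ by the previous step, and left-multiplying by $e_1v^{-1}$ yields $e_1=e_1v^{-1}(ve_1)\in\overline{e_1v^{-1}Bwe_1}$. But $e_1v^{-1}Bwe_1\subseteq e_1Me_1=\mathbb{C}e_1$, whose only closed subsets are finite or all of $\mathbb{C}e_1$, so its closure can contain $e_1$ only if it is nonzero; hence $e_1v^{-1}Bwe_1\neq 0$ and thus $eBe'=ve_1v^{-1}Bwe_1w^{-1}\neq 0$.

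The converse $eBe'\neq 0\Rightarrow w\leq v$ is the main obstacle. Writing $e_1=v_\lambda\otimes\phi_\lambda$ as the rank-one projection onto the highest-weight line $L$ in the representation $V$, one computes $eBe'\neq 0\iff\phi_\lambda(\rho(v^{-1}bw)v_\lambda)\neq 0$ for some $b\in B$; that is, $B\cdot(\rho(w)v_\lambda)$ is not annihilated by the covector $\phi_\lambda\circ\rho(v^{-1})$. The content is the classical non-vanishing criterion for this highest-weight matrix coefficient, which holds precisely when $w\leq v$ in $W^J$ (a Schubert-cell incidence in $G/P$); this is equivalent, again through Proposition~\ref{gjorder.prop} and Theorem~\ref{gjorder.thm}, to $ve_1\in\overline{Bwe_1}$. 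Alternatively one can stay inside $M$: for $x=ebe'\neq 0$ one has $ex=x$, so $x$ has the same image line as $e$ and the $B\times G$-orbit $BxG$ equals $BeG$, while $xe'=x$ ties $x$ to the right structure of $e'$; the remaining point, that $x\in\overline{Be'G}$, is once more equivalent to $w\leq v$ and is exactly where the real work lies. Either route supplies the converse and completes the equivalence $e<e'\iff BeG\subsetneq\overline{Be'G}$.
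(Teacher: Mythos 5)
Your translation of the geometric side into the Bruhat order on $W^J$ via Proposition~\ref{gjorder.prop} and Theorem~\ref{gjorder.thm} is legitimate (those results do not depend on the present proposition), and your argument for $w\leq v\Rightarrow eBe'\neq 0$ --- pushing $ve_1\in\overline{Bwe_1}$ into $e_1Me_1=\mathbb{C}e_1$ and using that a subset of a line whose closure contains $e_1$ must be nonzero --- is correct. But the other implication, $eBe'\neq 0\Rightarrow BeG\subseteq\overline{Be'G}$, is the actual content of the proposition, and you do not prove it: one branch of your argument appeals to a ``classical non-vanishing criterion'' for highest-weight matrix coefficients that you neither state precisely nor verify (and which, in this setting, is essentially the statement being proved, i.e.\ Theorem~\ref{order1vsorder2.thm}, whose proof in the paper relies on this very proposition), while the other branch you yourself concede reduces to showing ``$x\in\overline{Be'G}$,'' which is the claim at issue. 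That is a genuine gap, not a routine omission.

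The missing step has a short, self-contained monoid proof, which is what the paper does. If $eBe'\neq 0$ then $eBe'G\neq 0$, hence $e\overline{Be'G}\neq 0$. Since $e\in E_1$ we have $eM=eG\cup\{0\}$ with $eG$ a single right $G$-orbit, and $e\overline{Be'G}$ is a nonzero, right $G$-stable subset of $eM$; therefore it contains $eG$ and in particular $e$. On the other hand $e\in\overline{T}\subseteq\overline{B}$, so $e\overline{Be'G}\subseteq\overline{Be'G}$; hence $e\in\overline{Be'G}$ and $BeG\subseteq\overline{Be'G}$. (The forward direction is equally short: $e\in e\overline{BeG}\subseteq e\overline{Be'G}$ forces $eBe'G\neq 0$, hence $eBe'\neq 0$.) Note the logical order in the paper: this proposition is proved first by the direct argument above, and only then combined with Proposition~\ref{gjorder.prop} and Theorem~\ref{gjorder.thm} to obtain $e<e'$ if and only if $w<v$ (Theorem~\ref{order1vsorder2.thm}). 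Your plan runs that derivation in the opposite direction and therefore must supply the Bruhat-order characterization of $eBe'\neq 0$ by independent means, which it does not.
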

\begin{proof}
If $BeG\subset\overline{Be'G}$ we first observe that
$e\in e\overline{BeG}\neq 0$. But $e\overline{BeG}\subset\overline{Be'G}$,
and thus $eBe'G\neq 0$. Hence $eBe'\neq 0$.

Conversely, if $eBe'\neq 0$ then $eBe'G\neq 0$, and thus $e\overline{Be'G}\neq 0$.
But $eM=eG\cup\{0\}$ since $e\in E_1$. Thus $e\in e\overline{Be'G}=eM$.
But $e\overline{Be'G}\subset\overline{Be'G}$ since $eB\subset\overline{B}$.
Thus $e\in\overline{Be'G}$ and finally $BeG\subset\overline{Be'G}$.
\end{proof}

\begin{remark}  \label{usinggj.rk}
Notice that $BeG=BrG$ for $r\in We_1\cap eW=\{r\}$ (See Section 8.3 of \cite{R2}).
Similarly for $e'$ and $s\in We_1\cap e'W=\{s\}$. Thus an equivalent statement is
``$BrG\subset\overline{BsG}$" for these $r,s\in GJ$.
\end {remark}

The following theorem is the ``bridge" between the monoid geometry and the
Bruhat combinatorics.

\begin{theorem}    \label{order1vsorder2.thm}
The following are equivalent for $v,w\in W^J$.
\begin{enumerate}
  \item $e=ve_1v^{-1}<e'=we_1w^{-1}$ in $(E_1,<)$.
  \item $w<v$ in $(W^J,<)$, the Bruhat ordering on $W^J$.
\end{enumerate}
\end{theorem}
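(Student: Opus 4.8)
The plan is to obtain the equivalence as a short chain of the results already established, passing from the idempotent ordering on $E_1$ to the Gauss--Jordan ordering and then to the Bruhat ordering on $W^J$. The linchpin is the observation that the Gauss--Jordan element attached to $e=ve_1v^{-1}$ is exactly $ve_1$.

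First I would pin down the Gauss--Jordan representatives. Writing $r=ve_1$ and $s=we_1$, the identity $v^{-1}W=W$ gives $eW=ve_1v^{-1}W=ve_1W$, and plainly $ve_1\in We_1\cap ve_1W=We_1\cap eW$. By Remark~\ref{usinggj.rk} this intersection is the singleton consisting of the Gauss--Jordan element that represents $BeG$; hence $r=ve_1$ and $BeG=BrG$, and symmetrically $s=we_1$ with $Be'G=BsG$. Along the way it is worth recording that $v\mapsto ve_1v^{-1}$ is a bijection from $W^J$ onto $E_1$: the conjugation stabilizer of $e_1$ is $W_J$, so two elements of $W^J$ giving the same idempotent lie in the same coset of $W_J$ and therefore coincide.

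Next I would run the chain. By Proposition~\ref{geqvsbeg.prop}, statement (1) is equivalent to the proper inclusion $BeG\subsetneq\overline{Be'G}$, which by the previous step is $BrG\subsetneq\overline{BsG}$. Since $r,s\in GJ$ we have $BrB=Br$ and $BsB=Bs$, so Proposition~\ref{gjorder.prop} shows that $BrG\subseteq\overline{BsG}$ is equivalent to $Br\subseteq\overline{Bs}$, i.e. to $BrB\subseteq\overline{BsB}$, the relation $r\leq s$ of Theorem~\ref{gjorder.thm}. Feeding this into Theorem~\ref{gjorder.thm} converts $r\leq s$ into $w\leq v$. Finally the strictness matches up: $e\neq e'$ corresponds to $r\neq s$, hence to $v\neq w$ by the bijection above, so the proper inclusion $BeG\subsetneq\overline{Be'G}$ translates into $w<v$, which is statement (2).

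The step that will demand the most care is the bookkeeping that collapses the three a priori distinct orderings---through $BrG$, through $Br$, and through $BrB$---into a single relation on Gauss--Jordan elements, together with the identification of $ve_1$ as the correct Gauss--Jordan representative of $BeG$. Once $eW=ve_1W$ and the equality $BsB=Bs$ for $s\in GJ$ are in place, and strictness is tracked via the bijection $v\mapsto ve_1v^{-1}$, the listed propositions assemble into the desired equivalence with no new geometric input required.
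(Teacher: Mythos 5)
Your proposal is correct and follows essentially the same route as the paper: Proposition~\ref{geqvsbeg.prop} to pass to $BeG\subset\overline{Be'G}$, the identification $BeG=BrG$ with $r=ve_1\in GJ$ from Remark~\ref{usinggj.rk}, Proposition~\ref{gjorder.prop} to reduce to $Br\subseteq\overline{Bs}$, and Theorem~\ref{gjorder.thm} to land on the Bruhat order on $W^J$. Your extra bookkeeping (verifying $ve_1\in We_1\cap eW$ and tracking strictness via the bijection $v\mapsto ve_1v^{-1}$) only makes explicit what the paper leaves implicit.
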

\begin{proof}
By Proposition~\ref{geqvsbeg.prop}, $e<e'$ if and only if
$BeG\subset\overline{Be'G}$. As in Remark \ref{usinggj.rk}, let
$BeG=BrG$ and $Be'G=BsG$ where $r,s\in GJ$.

By Proposition~\ref{gjorder.prop}, $BrG\subset\overline{BsG}$
if and only if $Br\subseteq\overline{Bs}$. Then by
Theorem~\ref{gjorder.thm}, $Br\subseteq\overline{Bs}$ if and only if
$w<v$, where $r=ev=ve_1$, $s=e'w=we_1$ and $v,w\in W^J$.
\end{proof}

\noindent For $e\in E_1(\overline{T})$ we let
\[
\Gamma(e)=\{g\in E_2(\overline{T})\;|\; ge=e,\;\text{and}\; ge'=0\;\text{for all}\; e'>e\}.
\]

\begin{corollary}      \label{e2isarelation.lem}
Let $g\in E_2(\overline{T})$. Suppose that $e, f\in E_1(\overline{T})$ and that $e\neq f$.
Assume that $ge=e$ and $gf=f$. Then either $e>f$ or else $f>e$. In particular
\[
\Gamma(e)=\{g\in E_2(\overline{T})\;|\; ge=e,\;\text{and}\; ge'=e'\;\text{for some}\; e'<e\}.
\]
\end{corollary}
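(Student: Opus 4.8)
The plan is to analyze the structure of $E_2(\overline{T})$ (rank-two idempotents) and how they sit above the rank-one idempotents $E_1$. The corollary makes two claims: first, a comparability statement (any two distinct $e,f\in E_1$ below a common $g\in E_2$ must be $<$-comparable), and second, a reformulation of $\Gamma(e)$ replacing the condition ``$ge'=0$ for all $e'>e$'' by ``$ge'=e'$ for some $e'<e$''.

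First I would exploit that $g\in E_2(\overline{T})$ means $\dim(Tg)=2$, so the face $gE(\overline{T})$ (or equivalently $g\overline{T}$) is two-dimensional. The minimal nonzero idempotents of the two-dimensional torus-embedding $g\overline{T}$ are exactly the rank-one idempotents $e\in E_1$ with $ge=e$; by the combinatorics of a two-dimensional normal affine torus embedding (a cone in a rank-two lattice), there are \emph{exactly two} such minimal idempotents, call them $e$ and $f$, corresponding to the two extremal rays of the cone. This is the geometric heart of the argument: in rank two, $g$ dominates precisely two rank-one idempotents. So given the hypotheses $ge=e$, $gf=f$ with $e\neq f$, these are forced to be the two edges of the cone.

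Next I would establish the comparability $e>f$ or $f>e$. Here I would invoke Theorem~\ref{order1vsorder2.thm}, which translates the order $<$ on $E_1$ into the Bruhat order on $W^J$, together with the correspondence in Theorem~\ref{jirredorbit.thm} identifying $\Lambda_2$ (and hence the rank-two data) with $S\setminus J$. The key point is that the two edges $e,f$ of the rank-two face are related by a single simple reflection $s\in S\setminus J$, i.e. if $e=ve_1v^{-1}$ then $f=(vs)e_1(vs)^{-1}$ (or the analogous statement); since $v$ and $vs$ differ in length by one, exactly one of $v<vs$ or $vs<v$ holds in $W^J$, which via Theorem~\ref{order1vsorder2.thm} gives exactly one of $e<f$ or $f<e$. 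I would then show the two edges cannot be incomparable by ruling out $e=f$ and using the total-order-on-a-chain structure that the rank-two cone imposes.

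Finally, with comparability in hand, the reformulation of $\Gamma(e)$ is a short deduction. In one direction, if $g\in\Gamma(e)$ then $ge=e$ and (by the rank-two analysis) $g$ dominates a second edge $f=e'$; comparability forces $e'<e$ or $e'>e$, but $ge'\neq 0$ while $e'>e$ would contradict $ge'=0$, so $e'<e$, giving the new description. Conversely, if $ge=e$ and $ge'=e'$ for some $e'<e$, I must check $ge''=0$ for \emph{all} $e''>e$: since $g$ has exactly two edges $e$ and $e'$ with $e'<e$, any other rank-one $e''$ with $ge''\neq 0$ would force $ge''=e''$ making $e''$ a third edge, impossible in rank two. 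I expect the main obstacle to be rigorously pinning down the ``exactly two edges'' fact and verifying that $ge''\neq0$ indeed forces $e''$ to be a minimal (edge) idempotent of $g\overline{T}$ rather than some intermediate element --- i.e. carefully using the structure of $E(\overline{T})$ as the face lattice so that $ge''\in\{0,e''\}$ for rank-one $e''$, which is what makes the dichotomy clean.
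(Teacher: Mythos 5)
Your setup is correct as far as it goes: a rank-two idempotent $g$ dominates exactly two rank-one idempotents (the two rays of the corresponding two-dimensional cone), for $e'\in E_1$ one always has $ge'\in\{0,e'\}$, and the deduction of the displayed description of $\Gamma(e)$ from the comparability statement is routine. The gap is in the comparability step itself, which is the whole content of the corollary. You assert that the two vertices under $g$ are $ve_1v^{-1}$ and $(vs)e_1(vs)^{-1}$ with $s\in S\setminus J$ simple, and that since ``$v$ and $vs$ differ in length by one, exactly one of $v<vs$ or $vs<v$ holds in $W^J$.'' But the $v$ with $g=vg_sv^{-1}$ need not lie in $W^J$, and the elements of $W^J$ that actually represent the two vertices are the minimal coset representatives of $vW_J$ and $vsW_J$; these differ by an element of $S^JW_J$ (a non-simple reflection times an element of $W_J$), and their lengths can differ by far more than one. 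Example \ref{theeasyeg.ex} makes this vivid: there $\mathscr{P}_\lambda$ is a simplex, every pair of vertices is joined by an edge, and the endpoints $1$ and $s_n\cdots s_1$ of one such edge differ in length by $n$. So the ``length differs by one'' reasoning does not yield comparability; worse, the Bruhat dichotomy you are implicitly reaching for is Proposition \ref{dichotomy.prop}, which the paper deduces \emph{from} this corollary, so importing it here is circular. The closing appeal to a ``total-order-on-a-chain structure that the rank-two cone imposes'' is not an argument: the order $<$ on $E_1$ is defined via $eBe'\neq 0$ and is not visible from the cone of $\overline{T}$ alone.

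The paper's proof closes exactly this gap without touching Bruhat order, using the partition $E(\overline{T})\setminus\{0\}=\bigsqcup_{e''\in E_1}\mathscr{C}_{e''}$ from Definition \ref{bbforeoftbar.def} (a consequence of Theorem \ref{orderone1.thm}). Since $ge''=e''$ forces $e''\in\{e,f\}$, the unique cell containing $g$ is $\mathscr{C}_e$ or $\mathscr{C}_f$. If $e\not>f$, then any $e'>f$ lies outside $\{e,f\}$, so $ge'=0$, whence $g\in\mathscr{C}_f$ and therefore $g\notin\mathscr{C}_e$; as $ge=e$, the membership condition for $\mathscr{C}_e$ must fail at some $e'>e$ with $ge'=e'$, and the ``exactly two'' fact forces $e'=f$, i.e. $f>e$. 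You should replace your comparability step with this argument (or an equivalent one that does not presuppose the Bruhat dichotomy).
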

\begin{proof}
Suppose that $e\not> f$. Recall Definition~\ref{bbforeoftbar.def}. Then $g\in\mathscr{C}_{f}$,
since we have that $ge'=0$ for any $e'>f$. In particular, $g\not\in\mathscr{C}_{e}$.
Thus there exists $e'>e$ such that $ge'=e'$. But then $e'=f$ since $g\in E_2$. Thus $f>e$.
\end{proof}

\begin{remark}  \label{e2isarelation.rk}
If we think of $\leq$ as a relation on $E_1$ then
Corollary~\ref{e2isarelation.lem} says that we can regard
$E_2$ as a subrelation of $\leq$. Notice, in particular, that
\[
E_2=\bigsqcup_{e\in E_1}\Gamma(e).
\]
In general we can identify $E_1$ and $E_2$ with the vertices and
edges, respectively, of a certain polytope. See Remark \ref{lexicon.rk}
for a detailed explanation of how this works in the case of a 
$\mathscr{J}$-irreducible monoid.
\end{remark}

We now return to the case of a $\mathscr{J}$-irreducible monoid. This is the case
that is relevant to the discussion of descent systems.
Recall that, in the general case, $\Lambda_2=\{e\in E_2\;|\; eB=eBe\}$.
But if $M$ is $\mathscr{J}$-irreducible, it follows from part {\em 2} of 
Theorem~\ref{jirredorbit.thm} that, there is a canonical bijection
\[
\Lambda_2\cong S\setminus J.
\]
This bijection is defined by
\[
s\leadsto g_s,
\]
where $g_s\in\Lambda_2$ is the unique idempotent such that
\begin{enumerate}
\item $sg_s=g_ss\neq g_s$.
\item $g_sB\subseteq Bg_s$.
\end{enumerate}
See Theorem 4.13 of \cite{PR} for the detailed proof.

Since each $g\in\Gamma(e)$ is conjugate to one and only one
$g_s\in\Lambda_2$ we can write
\[
\Gamma(e)=\bigsqcup_{s\in S\setminus J}\Gamma_s(e),
\]
where
\[
\Gamma_s(e)=\{g\in\Gamma(e)\;|\;g=vg_sv^{-1}\;\text{for some}\;v\in W\}.
\]
We now translate what we have learned from the monoids into
results about Bruhat posets. Theorem~\ref{order1vsorder2.thm} is the main
result here that makes this possible. The following definition is
the key ingredient that unifies our discussion.

\begin{definition}\label{descentset.def}
Let $(W,S)$ be a Weyl group and let $J\subseteq S$ be a proper subset.
Define
\[
S^J=(W_J(S\setminus J)W_J)\cap W^J.
\]
We refer to $(W^J,S^J)$ as the {\em descent system} associated with
$J\subseteq S$.
\end{definition}

\begin{proposition} \label{csvsdescent.prop}
There is a canonical identification
$S^J\cong\{g\in E_2\;|\;ge_1=e_1\}$.
\end{proposition}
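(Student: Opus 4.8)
The plan is to identify the right-hand set with the family $\Gamma(e_1)$ of ``edges through $e_1$'' and then read $S^J$ off the second endpoints. First I would check that $e_1$ is the maximum of the poset $(E_1,<)$: by Theorem~\ref{order1vsorder2.thm}, $ve_1v^{-1}<e_1$ (for $v\in W^J$) holds exactly when $1<v$ in the Bruhat order on $W^J$, hence for every $v\neq 1$, since $1$ is the least element of $W^J$; in particular there is no $e'>e_1$. Therefore the clause ``$ge'=0$ for all $e'>e_1$'' in the definition of $\Gamma(e_1)$ is vacuous, and
\[
\Gamma(e_1)=\{g\in E_2\;|\; ge_1=e_1\}.
\]
This is precisely the right-hand side of the proposition, so it remains to give a canonical bijection $\Gamma(e_1)\cong S^J$.

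Next I would send each $g\in\Gamma(e_1)$ to its second fixed vertex. By Corollary~\ref{e2isarelation.lem}, and because $g$ is an edge of $\mathscr{P}_\lambda$ (so it has exactly two vertices in $E_1$), every $g\in\Gamma(e_1)$ fixes, apart from $e_1$, exactly one further $e'=e'(g)\in E_1$ with $e'<e_1$. Writing $E_1=\{ve_1v^{-1}\;|\;v\in W^J\}$ --- the conjugation stabilizer of $e_1$ being $W_J$, so that $E_1\cong W/W_J\cong W^J$ --- this produces a map $\Phi\colon\Gamma(e_1)\to W^J\setminus\{1\}$, $\Phi(g)=v$ where $e'(g)=ve_1v^{-1}$. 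Since $E_1$ and $E_2$ are the vertices and edges of the polytope $\mathscr{P}_\lambda$ (Remark~\ref{e2isarelation.rk}) and two vertices of a convex polytope span at most one edge, $g\mapsto e'(g)$ is injective; hence $\Phi$ is injective.

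It then remains to show $\mathrm{Im}(\Phi)=S^J$, which is where the $\mathscr{J}$-irreducible structure is used. From Theorem~\ref{jirredorbit.thm} I have the bijection $\Lambda_2\cong S\setminus J$, $s\leadsto g_s$, together with the decomposition $\Gamma(e_1)=\bigsqcup_{s\in S\setminus J}\Gamma_s(e_1)$. I would first note that $g_s$ fixes exactly $e_1$ and $se_1s$: indeed $g_se_1=e_1$ because $e_1$ is the minimal idempotent, and $g_s(se_1s)=sg_se_1s=se_1s$ because $sg_s=g_ss$. For $g\in\Gamma_s(e_1)$, writing $g=vg_sv^{-1}$, the condition $ge_1=e_1$ forces $v^{-1}e_1v\in\{e_1,se_1s\}$, i.e. $v\in W_J\cup W_Js$; using $sg_ss=g_s$ one may always take $v=u\in W_J$, and then the second vertex of $g$ is $(us)e_1(us)^{-1}$, so $\Phi(g)$ is the minimal-length representative of $usW_J\in W_JsW_J$, an element of $W_JsW_J\cap W^J$. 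Conversely, for $v\in S^J$ write $v=w_1sw_2$ with $w_1,w_2\in W_J$ and $s\in S\setminus J$; then $g=w_1g_sw_1^{-1}\in\Gamma_s(e_1)$ has second vertex $(w_1s)e_1(w_1s)^{-1}=ve_1v^{-1}$, whence $\Phi(g)=v$. Thus $\mathrm{Im}(\Phi)=\bigcup_{s\in S\setminus J}\left(W_JsW_J\cap W^J\right)=(W_J(S\setminus J)W_J)\cap W^J=S^J$.

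The step I expect to be most delicate is this last image computation, which is really a double-coset bookkeeping. Two supporting facts need care. First, that the union over $s$ is disjoint and exhausts $S^J$: this rests on the double cosets $W_JsW_J$ being distinct for distinct $s\in S\setminus J$, which holds because $s$ is the unique minimal-length element of $W_JsW_J$. Second, if one prefers to establish the bijection $\Gamma_s(e_1)\cong W_JsW_J\cap W^J$ on each piece directly (rather than borrowing injectivity from the polytope), one needs $W_J\cap sW_Js=W_{\{t\in J\,:\,st=ts\}}=C_W(g_s)\cap W_J$; this is the instance of Kilmoyer's parabolic-intersection theorem for the $(J,J)$-reduced reflection $s$, and it is exactly what guarantees that two conjugates $ug_su^{-1}$ and $u'g_su'^{-1}$ with $u,u'\in W_J$ share a second vertex only when they coincide. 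With these in hand, $\Phi$ is the asserted canonical identification.
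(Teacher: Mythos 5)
Your argument is correct and follows essentially the same route as the paper: you build the inverse of the paper's map $\theta(w)=[e_1,\varphi(w)]$, using the same key inputs (each edge of $\mathscr{P}_\lambda$ has exactly two vertices, the edges through $e_1$ are the $W_J$-conjugates of the $g_s$, and the conjugation stabilizer of $e_1$ is $W_J$). The preliminary identification of $\{g\in E_2\;|\;ge_1=e_1\}$ with $\Gamma(e_1)$ via the maximality of $e_1$ is a harmless extra step not needed in the paper's version.
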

\begin{proof}
We first define
\[
\varphi : W_J(S\setminus J)W_J\to E_1
\]
by $\varphi(w)=we_1w^{-1}$. Then $\varphi(w)=\varphi(v)$ if and only if
$wW_J=vW_J$. Hence $\varphi$ induces an injection $\varphi : S^J\to E_1$.
We now identify the image of $\varphi$. Let
\[
N(e_1)=\{e\in E_1\;|\; ge=e\neq e_1\;\text{and}\;ge_1=e_1\;\text{for some}\;g\in
E_2\}
\]
and let $e\in E_1(e_1)$. Then there exists a {\bf unique} $g\in E_2$ such that
$ge=e$ and $ge_1=e_1$. By Proposition 6.27 of \cite{P} and Theorem 4.13 of \cite{PR}
there exists $u\in W_J$ such that
\[
g=ug_su^{-1}
\]
for some unique $s\in S\setminus J$. But then $use_1su^{-1}=e$, since
$gf=f$ for exactly two rank-one idempotents $f$. It follows that
\[
image(\varphi)=N(e_1).
\]
The sought-after identification, $\theta : S^J\cong\{g\in E_2\;|\;ge_1=e_1\}$, is
defined by
\[
\theta(w)=[e_1,\varphi(w)],
\]
where, by definition, $[e_1,\varphi(w)]$ is the unique rank-two idempotent
$g$ such that $ge_1=e_1$ and $g\varphi(w)=\varphi(w)$.
\end{proof}

\begin{proposition} \label{dichotomy.prop}
Let $u,v\in W^J$ be such that $u^{-1}v\in S^JW_J$. In
particular, $u\neq v$. Then either
$u<v$ or $v<u$ in the Bruhat order $<$ on $W^J$.
\end{proposition}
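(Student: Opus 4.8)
The plan is to pass to the rank-one idempotents and read off the conclusion through the dictionary of Theorem~\ref{order1vsorder2.thm}. Set $e=ue_1u^{-1}$ and $f=ve_1v^{-1}$ in $E_1$. By Theorem~\ref{order1vsorder2.thm}, $e<f$ in $(E_1,<)$ forces $v<u$ in $(W^J,<)$, while $f<e$ forces $u<v$; in either case $u$ and $v$ are comparable. Hence it suffices to prove that $e$ and $f$ are comparable in $(E_1,<)$.

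Next I would unwind the hypothesis $u^{-1}v\in S^JW_J$. Write $u^{-1}v=sw$ with $s\in S^J$ and $w\in W_J$, so that $v=usw$. Since $J=\{s\in S\;|\;se_1=e_1s\}$, every element of $W_J$ conjugation-fixes $e_1$; thus $we_1w^{-1}=e_1$, and therefore $f=ve_1v^{-1}=u(se_1s^{-1})u^{-1}=u\varphi(s)u^{-1}$, where $\varphi(s)=se_1s^{-1}$ is as in Proposition~\ref{csvsdescent.prop}. Because $s\in S^J$, Proposition~\ref{csvsdescent.prop} tells us that $\varphi(s)$ lies in the image $N(e_1)$ of $\varphi$; that is, $\varphi(s)\neq e_1$ and there is an idempotent $g\in E_2$ with $ge_1=e_1$ and $g\varphi(s)=\varphi(s)$. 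This $g$ is the ``edge'' joining $e_1$ to $\varphi(s)$, and it is the object I would transport.

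The key step is to conjugate the connecting idempotent rather than the order relation. Put $g'=ugu^{-1}$. Conjugation by a representative in $N(T)$ of $u$ normalizes $\overline{T}$ and preserves the rank $\dim(Te)$ of an idempotent, so $g'\in E_2(\overline{T})$. One checks immediately that $g'e=(ugu^{-1})(ue_1u^{-1})=uge_1u^{-1}=e$ and, using $f=u\varphi(s)u^{-1}$, that $g'f=ug\varphi(s)u^{-1}=f$. Moreover $e\neq f$: since $s\in W^J\setminus\{1\}$ we have $s\notin W_J$, so $u^{-1}v=sw\notin W_J$, forcing $u\neq v$ and hence $e\neq f$. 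Now Corollary~\ref{e2isarelation.lem}, applied to $g'\in E_2$ and the distinct idempotents $e,f\in E_1$ with $g'e=e$ and $g'f=f$, yields $e>f$ or $f>e$. Combined with the first paragraph this gives $u<v$ or $v<u$, as required.

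I expect the main obstacle to be resisting the temptation to conjugate the pair $(e,f)$ back to $(e_1,\varphi(s))$. The order $<$ on $E_1$ is defined through the Borel subgroup $B$ (Definition~\ref{theorderone1.def}) and is \emph{not} invariant under $W$-conjugation, so the comparability of $e_1$ and $\varphi(s)$ furnished by Proposition~\ref{csvsdescent.prop} and Corollary~\ref{e2isarelation.lem} does \emph{not} transport to $e$ and $f$. What saves the argument is that the \emph{hypotheses} $g'e=e$ and $g'f=f$ of Corollary~\ref{e2isarelation.lem} are stable under conjugation: one moves the rank-two idempotent $g$ to $g'=ugu^{-1}$ and invokes the corollary directly at $e$ and $f$, never appealing to the order relation at $e_1$ itself.
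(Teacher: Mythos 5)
Your proof is correct and follows essentially the same route as the paper's: write $v=usw$ with $s\in S^J$, $w\in W_J$, take the rank-two idempotent $g$ joining $e_1$ to $se_1s^{-1}$ from Proposition~\ref{csvsdescent.prop}, conjugate it by $u$ so that it fixes both $ue_1u^{-1}$ and $ve_1v^{-1}$, and conclude via Corollary~\ref{e2isarelation.lem} and Theorem~\ref{order1vsorder2.thm}. Your closing observation --- that one must transport the connecting idempotent rather than the order relation, since $<$ on $E_1$ is defined via $B$ and is not $W$-equivariant --- is a worthwhile clarification that the paper leaves implicit.
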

\begin{proof}
If $u,v\in W^J$ with $v=urc$, $r\in S^J$, $c\in W_J$, consider
as in Proposition~\ref{csvsdescent.prop}, $g_r\in E_2(e_1)$. Then let
$g=ug_ru^{-1}$. Then $g$ is the unique rank-two idempotent such that
$gue_1u^{-1}=ue_1u^{-1}$ and $gve_1v^{-1}=ve_1v^{-1}$.

Recall from Theorem~\ref{order1vsorder2.thm} that, for $u,v\in W^J$
\[
ue_1u^{-1}>ve_1v^{-1}\;\text{if and only if}\; u<v.
\]
But from Corollary~\ref{e2isarelation.lem}, for $g\in E_2$
with $ge_i=e_i$, $i=2,3$, either $e_2>e_3$ or else $e_3>e_2$.
The conclusion follows.
\end{proof}

We let
\[
S^J_s=W_JsW_J\cap W^J.
\]

\begin{remark}  \label{sjs.rk}
Notice that 
\[
S^J=\bigsqcup_{s\in S\setminus J}S^J_s.
\]
Indeed, by the proof of Proposition \ref{csvsdescent.prop}, 
$\theta : S^J\cong\{g\in E_2\;|\;ge_1=e_1\}$. Under this correspondence
$S^J_s$ corresponds to 
$\{g\in E_2\;|\;ge_1=e_1\;\text{and}\;g=wg_sw^{-1}\;\text{for some}\; w\in W_J\}$.
\end{remark}

\begin{definition} \label{descentsets.def}
Let $w\in W^J$. Define
\begin{enumerate}
\item $D^J_s(w)=\{r\in S^J_s\;|\;wrc<w\;\text{for some}\;c\in W_J\}$, and
\item $A^J_s(w)=\{r\in S^J_s\;|\;w<wr\}$.
\end{enumerate}
We refer to $D^J(w)=\sqcup_{s\in S\setminus J}D^J_s(w)$ as the {\em descent set} of $w$
relative to $J$, and $A^J(w)=\sqcup_{s\in S\setminus J}A^J_s(w)$ as the {\em ascent set}
of $w$ relative to $J$.
\end{definition}

By Proposition~\ref{dichotomy.prop}, for any $w\in W^J$, $S^J=D^J(w)\sqcup
A^J(w)$.

\begin{remark}  \label{minlength.rk}
Notice that $wrc<w$ for some $c\in W_J$ if and only if $(wr)_0<w$, where
$(wr)_0\in wrW_J$ is the element of minimal length in $wrW_J$. See
Example~\ref{theeasyeg.ex} for a revealing illustration of the fact that
$S^J=D^J(w)\sqcup A^J(w)$.
\end{remark}

\begin{definition}   \label{augmented.def}
For each $v\in W^J$ and each $s\in S\setminus J$ define
$\nu_s(v)=|A^J_s(v)|$. We refer to $(W^J,\leq,\{\nu_s\})$ as the
{\em augmented poset} of $J$. For convenience we let
\[
\nu(v)=\sum_{s\in S\setminus J}\nu_s(v).
\]
\end{definition}

\begin{example} \label{augposetfors4.ex}
Let $(W,S)$ be the Weyl group of type $A_3$, so that $W = S_4$ and
$S = \{s_1,s_2,s_3\}$. Let $J=\phi$ and write $\nu_i$ for
$\nu_{s_i}$. To keep track of all the numbers $\{\nu_i(w)\;|\; w\in W\}$
define
\[
H(t_1,t_2,t_3)=\sum_{w\in W}t_1^{\nu_1(w)}t_2^{\nu_2(w)}t_3^{\nu_3(w)}.
\]
A straightforward calculation yields
\[
H(t_1,t_2,t_3)=1+(3t_1+5t_2+3t_3)+(3t_2t_3+5t_1t_3+3t_1t_2)+t_1t_2t_3.
\]
\end{example}

See Examples \ref{theeasyeg.ex} and \ref{thategfromwgspbppp.ex} below for a better
illustration of how it works if $J\neq\emptyset$.

\begin{theorem}  \label{alltherest.thm}
Let $J\subset S$ be any proper subset. For $e=ue_1u^{-1}$, $u\in W$, we
write $e=e_u$.
\begin{enumerate}
	\item $E_2\cong\{(u,v)\in W^J\times W^J\;|\;
	u<v\;\text{and}\;u^{-1}v\in S^JW_J\}$.
	\item Let $u\in W^J$ and $e_u=ueu^{-1}\in E_1$. Then\\
        $E_2(e_u)\cong\{v\in W^J\;|\;u^{-1}v\in S^JW_J\}$.
	\item Let $u\in W^J$ and $e_u=ueu^{-1}\in E_1$. Then\\
  $\Gamma(e_u)\cong\{v\in W^J\;|\;u<v\;\text{and}\;u^{-1}v\in S^JW_J\}\cong A^J(u)$.
        \item Let $u\in W^J$ and $e_u=ueu^{-1}\in E_1$. Then\\
  $\Gamma_s(e_u)\cong\{v\in W^J\;|\;u<v\;\text{and}\;u^{-1}v\in S^J_sW_J\}\cong A^J_s(u)$.
        \item If $w\in W^J$ and $s\in S\setminus J$ then $\nu_s(u)=|\Gamma_s(e_u)|$.
\end{enumerate}
\end{theorem}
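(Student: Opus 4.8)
The plan is to prove Theorem~\ref{alltherest.thm} by establishing the dictionary between the idempotent geometry and the Bruhat combinatorics one level at a time, with parts (1)--(4) supplying the structural bijections and part (5) then reading off as a counting corollary. The single guiding principle throughout is Theorem~\ref{order1vsorder2.thm}, which converts the order $<$ on $E_1$ into the (reversed) Bruhat order on $W^J$, namely $e_u = ue_1u^{-1} < e_v = ve_1v^{-1}$ in $(E_1,<)$ if and only if $v < u$ in $(W^J,\leq)$.

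\medskip

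\noindent\emph{Parts (1)--(2).} First I would recall from Remark~\ref{e2isarelation.rk} that $E_2 = \bigsqcup_{e\in E_1}\Gamma(e)$, and from Corollary~\ref{e2isarelation.lem} that each $g\in E_2$ dominates exactly two distinct rank-one idempotents $e>f$; this already exhibits $E_2$ as a subrelation of $(E_1,<)$, i.e.\ as a set of comparable pairs. To get part (2) I would fix $u\in W^J$ and show the map sending $g\in E_2(e_u)$ to its \emph{other} fixed rank-one idempotent $e_v$ (the one $\neq e_u$) is a bijection onto $\{e_v : v\in W^J,\ u^{-1}v\in S^JW_J\}$. The condition $u^{-1}v\in S^JW_J$ is exactly the translate, by $u$, of the defining condition ``$ge_1=e_1$'' from Proposition~\ref{csvsdescent.prop}: conjugating the identification $\theta\colon S^J\cong\{g\in E_2 : ge_1=e_1\}$ by $u$ produces $\{g\in E_2 : ge_u=e_u\}=E_2(e_u)$, and under this the pair $(e_u,e_v)$ corresponds to $u^{-1}v \bmod W_J \in S^J$. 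Part (1) then follows by fusing these fibers over all $u\in W^J$ and orienting each edge by its larger element; writing each pair with $u<v$ (in $W^J$) pins down a unique representative of the unordered pair, giving the stated bijection with $\{(u,v) : u<v,\ u^{-1}v\in S^JW_J\}$.

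\medskip

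\noindent\emph{Parts (3)--(4).} Here is where Theorem~\ref{order1vsorder2.thm} does the real work. By Definition~\ref{bbforeoftbar.def} and Corollary~\ref{e2isarelation.lem}, $\Gamma(e_u)$ consists of those $g\in E_2(e_u)$ whose \emph{other} fixed idempotent $e_v$ satisfies $e_v < e_u$ in $(E_1,<)$. By Theorem~\ref{order1vsorder2.thm} this order relation $e_v<e_u$ translates to $u<v$ in $W^J$ --- note the reversal. Combining with part (2), $\Gamma(e_u)\cong\{v\in W^J : u<v,\ u^{-1}v\in S^JW_J\}$. The final identification with $A^J(u)$ comes from Definition~\ref{descentsets.def}: since $u<v$ and $u^{-1}v$ lands in $S^JW_J$, writing $u^{-1}v = rc$ with $r\in S^J$, $c\in W_J$ gives $v\in urW_J$, and Remark~\ref{minlength.rk} says $u<v$ (with $v$ the minimal-length coset representative) is equivalent to $u<ur$, i.e.\ $r\in A^J(u)$. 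Part (4) is the same argument carried out inside a single conjugacy class: restricting to $g\in\Gamma_s(e_u)$ forces $g=wg_sw^{-1}$, which under $\theta$ (via Remark~\ref{sjs.rk}) restricts the coset $r$ to lie in $S^J_s=W_JsW_J\cap W^J$, matching $A^J_s(u)$.

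\medskip

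\noindent\emph{Part (5).} This is immediate once (4) is in hand: $\nu_s(u)=|A^J_s(u)|$ by Definition~\ref{augmented.def}, and the bijection $\Gamma_s(e_u)\cong A^J_s(u)$ gives $|A^J_s(u)|=|\Gamma_s(e_u)|$. The main obstacle I anticipate is in part (2)/(3): verifying that the ``other fixed idempotent'' map is genuinely well-defined and bijective onto the claimed index set requires care that the uniqueness clauses in Corollary~\ref{e2isarelation.lem} and Proposition~\ref{csvsdescent.prop} interact correctly under conjugation by $u$, and that the coset condition $u^{-1}v\in S^JW_J$ is preserved and not accidentally coarsened when passing between $v\in W^J$ and the rank-one idempotent $e_v$. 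The order reversal in Theorem~\ref{order1vsorder2.thm} must also be tracked vigilantly, since it is precisely what converts the ``downward'' condition $e_v<e_u$ defining $\Gamma(e_u)$ into the ``upward'' ascent condition $u<v$.
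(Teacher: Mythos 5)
Your proof is correct and takes essentially the same route as the paper, whose entire proof of this theorem is the one-line citation ``This follows from Proposition~\ref{csvsdescent.prop} and Proposition~\ref{dichotomy.prop}''; you have simply unfolded the ingredients those two propositions rest on (the conjugated identification $\theta$, Corollary~\ref{e2isarelation.lem}, and the order reversal of Theorem~\ref{order1vsorder2.thm}). The only quibble is your attribution of the equivalence $u<v \Leftrightarrow u<ur$ to Remark~\ref{minlength.rk} alone --- that remark handles the descent side, and you need the dichotomy $S^J=D^J(u)\sqcup A^J(u)$ of Proposition~\ref{dichotomy.prop} to convert it into the ascent statement --- but this is exactly what the paper's cited proposition supplies.
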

\begin{proof}
This follows from Proposition~\ref{csvsdescent.prop} and
Proposition~\ref{dichotomy.prop}.
\end{proof}

\begin{remark}   \label{lexicon.rk}
The following table provides the reader with a summary-translation
between the monoid jargon and the Bruhat poset jargon. Let $M$ be a $\mathscr{J}$-irreducible 
monoid of type $J$, and let $\overline{T}$ be the closure in $M$ of a maximal torus.
Let $E=E(\overline{T})$ be
the set of idempotents of $\overline{T}$ and let $E_i=\{f\in E\;|\; dim(fT)=i\}\subset E$.
As above, we let $e_1\in E_1=E_1(\overline{T})$ be the unique element such that $e_1B=e_1Be_1$.
For $e,e'\in E_1$ let $v,w\in W^J$ be the unique elements such that $e=ve_1v^{-1}$ and $e'=we_1w^{-1}$.
We write $e=e_v$ and $e'=e_w$. For $e,f\in E$ we write $e\sim f$ if there exists $w\in W$ such
that $wew^{-1}=f$. If $s\in S\setminus J$ let $g_s\in E_2$ be the unique idempotent such that
$g_ss=sg_s$ and $g_sB=g_sBg_s$. Let $\Lambda^\times=\{I\subset S\;|\; \text{no component of $I$ is
contained in $J$}\}$ and for $I\in \Lambda^\times$ let $I^*=I\cup\{t\in J\;|\; ts=st\;\text{for all}\;
s\in I\;\}$.\\

\begin{tabular}{ | l | l | }
 \hline
 {\bf Reductive Monoid Jargon} & {\bf Bruhat Order Jargon}\\ \hline\hline
 $e_1\in\Lambda_1=\{e_1\}$                  &   $1\in W^J$                           \\ \hline
 $e=e_v\in E_1$                             &   The $v\in W^J$ with $e=ve_1v^{-1}$   \\ \hline
 $e_v\leq e_w$ in $E_1$, i.e. $e_vBe_w\neq 0$   & $w\leq v$ in $W^J$                 \\ \hline
 \;                                         &   $(u,v)\in W^J\times W^J$ such that\\
 $E_2=\{g\in E\;|\; dim(gT)=2\}$            &   $u < v$ and $u^{-1}v\in S^JW_J$      \\ \hline
 $\{g\in E_2\;|\; gB=gBg\}$                 &   $S\setminus J$                       \\ \hline
 $\{g\in E_2\:|\; ge_1=e_1\;\}$             &   $S^J=(W_J(S\setminus J)W_J)\cap W^J$ \\ \hline
 $\{g\in E_2\:|\; ge_1=e_1, g\sim g_s\}$    &   $S^J_s=(W_JsW_J)\cap W^J$            \\ \hline
 $E_2(e_w)=\{g\in E_2\;|\; ge_w=e_w\}$      &   $\{v\in W^J\;|\;w^{-1}v\in S^JW_J\}$ \\ \hline
 $\Gamma(e_w)=\{g\in E_2(e_w)\;|\; ge'=e'\;\text{for some}\; e'<e_w\}$ & $A^J(w)=\{r\in S^J\;|\;w<wr\}$   \\ \hline
 $\Gamma_s(e_w)=\Gamma(e_w)\cap\{g\in E_2\;|\; g\sim g_s\;\}$  &  $A^J_s(w)=\{r\in S^J_s\;|\;w<wr\}$   \\ \hline
 $E(\overline{T})\setminus\{0\}$            &   $\{(w,I)\;|\; I\in\Lambda^\times,\; w<ws\;\text{if}\; s\in I^*\}$ \\ \hline
\end{tabular}
\end{remark}

\noindent The ``picture" here is this. The subset $W^J\subseteq W$ is canonically
identified with the set of vertices of the rational polytope $\mathscr{P}_\lambda$. On the
other hand there is a canonical ordering on $E_1=E_1(\overline{T})$ coming from the
associated reductive monoid. Evidently $(E_1,\leq)$ and $(W^J,\leq)$ are anti-isomorphic as
posets. Furthermore the set of edges $Edg(\mathscr{P}_\lambda)$ of $\mathscr{P}_\lambda$
is canonically identified with $E_2=E_2(\overline{T})$. If $g(v,w)=g(w,v)\in
Edg(\mathscr{P}_\lambda)$ is the edge of $\mathscr{P}_\lambda$ joining the distinct
vertices $v,w\in W^J$ then either $v<w$ or else $w<v$. Given $v\in W^J$, with edges
$Edg(v)=\{g\in E_2\;|\; g=g(v,w)\;\text{for some}\; w\in W^J\}$, the question of
whether $v<w$ or $w<v$ is coded in the ``descent system" $(W^J, S^J)$.

\section{Bruhat Posets and Simple Polytopes}       \label{two.sec}

Recall that if $\lambda\in\mathscr{C}$, then the rational polytope $\mathscr{P}_\lambda$
records the combinatorial properties of the orbit structure of $T$ on $\overline{T}$.
In this section we characterize, in terms of $J\subseteq S$, the conditions under
which $\mathscr{P}_\lambda$ is a {\bf simple polytope}. A polytope $\mathscr{P}$ is called
{\em simple} if each vertex figure of $\mathscr{P}$ is a simplex, or equivalently,
each vertex is the endpoint of exactly $m$ edges $\mathscr{P}$, where $m$ is the
dimension of $\mathscr{P}$. An equivalent formulation is the following. Recall that
$X(J)=[\overline{T}\setminus\{0\}]/\mathbb{C}^*$, where $\overline{T}$ is as above, with
$J=\{s\in S\;|\; s(\lambda)=\lambda\}$. Then $\mathscr{P}_\lambda$ is a simple polytope
if and only if $X(J)$ is rationally smooth.

\begin{definition}   \label{combsmooth.def}
We refer to $J$ as {\em combinatorially smooth} if
$\mathscr{P}_\lambda$ is a simple polytope.
\end{definition}

As usual we let $e_1\in E_1$ be the unique rank-one idempotent such that $e_1B=e_1Be_1$.
If $J\subseteq S$ we let $\pi_0(J)$ denote the set of connected components of $J$.
To be more precise, let $s,t\in J$. Then $s$ and $t$ are in the same connected component
of $J$ if there exist $s_1,...,s_k\in J$ such that $ss_1\neq s_1s$, $s_1s_2\neq s_2s_1$,....,
$s_{k-1}s_k\neq s_ks_{k-1}$, and $s_kt\neq ts_k$.

The following theorem indicates exactly how to detect the very
interesting condition of Definiton~\ref{combsmooth.def}. We use, without mention,
the natural correspondence between the face lattice of $\mathscr{P}_\lambda$
and the set of idempotents of $\overline{T}$. See Remark \ref{lexicon.rk}.

\begin{theorem}     \label{almostsmooth.thm}
Let $\lambda\in\mathscr{C}$. The following are equivalent.
\begin{enumerate}
\item $\mathscr{P}_\lambda$ is a simple polytope.
\item There are exactly $|S|$ edges of $\mathscr{P}_\lambda$ meeting at $\lambda$.
\item $J=\{s\in S\;|\; s(\lambda)=\lambda\}$ has the properties
  \begin{enumerate}
    \item If $s\in S\backslash J$, and $J\not\subseteq C_W(s)$, then there is a unique
    $t\in J$ such that $st\neq ts$. If $C\in\pi_0(J)$ is the unique
    connected component of $J$ with $t\in C$ then $C\backslash\{t\}\subseteq C$
    is a setup of type $A_{l-1}\subseteq A_l$.
    \item For each $C\in\pi_0(J)$ there is a unique $s\in S\backslash J$
    such that $st\neq ts$ for some $t\in C$.
  \end{enumerate}
\end{enumerate}
\end{theorem}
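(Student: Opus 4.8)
The plan is to reduce everything to counting the edges of $\mathscr{P}_\lambda$ that meet at the single vertex $\lambda$, and then to evaluate that count in terms of the Dynkin diagram. I identify the vertex set of $\mathscr{P}_\lambda$ with $W^J=W\cdot\lambda$ and the edge set with $E_2(\overline{T})$. By Remark~\ref{lexicon.rk} and Theorem~\ref{alltherest.thm}, the edges incident to $\lambda$ (the vertex $u=1\in W^J$) are exactly $E_2(e_1)=\{g\in E_2\mid ge_1=e_1\}$, which by Proposition~\ref{csvsdescent.prop} is in bijection with $S^J$; hence $\lambda$ lies on exactly $|S^J|=\sum_{s\in S\setminus J}|S^J_s|$ edges, using Remark~\ref{sjs.rk}. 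I also use that $\dim\mathscr{P}_\lambda=|S|$ (equivalently $X(J)$ has dimension $|S|$), which holds once no connected component of $S$ lies inside $J$ --- a condition forced anyway by $3(b)$. The equivalence $1\Leftrightarrow 2$ is then soft: since $\mathscr{P}_\lambda$ is $W$-invariant and $W$ acts transitively on its vertices, every vertex lies on the same number $|S^J|$ of edges; as every vertex of a $d$-polytope lies on at least $d$ edges, with simplicity equivalent to equality at every vertex, simplicity is equivalent to $|S^J|=|S|$. The real content is therefore the arithmetic equivalence $|S^J|=|S|\Leftrightarrow 3$.

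To compute $|S^J_s|$, note that $S^J_s=W_JsW_J\cap W^J$ is the set of minimal-length representatives of the right $W_J$-cosets inside $W_JsW_J$, so $|S^J_s|=|W_J|/|W_J\cap sW_Js|$. Decomposing $J$ into connected components and letting $D(s)$ be the components adjacent to $s$, one has $W_C\cap sW_Cs=W_C$ for $C\notin D(s)$, whence
\[
|S^J_s|=\prod_{C\in D(s)}n_{s,C},\qquad n_{s,C}=[W_C:W_C\cap sW_Cs].
\]
The key identity is $n_{s,C}=|W_C\cdot\alpha_s|$: the coset space $W_{C\cup\{s\}}/W_C$ is the orbit of the $W_C$-fixed weight $\omega_s$, and the $W_C$-orbit of the neighbouring vertex $s\cdot\omega_s=\omega_s-\alpha_s$ has size $|W_C\cdot\alpha_s|$. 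Since Dynkin diagrams are forests, $s$ is adjacent to a unique node $t=t_C$ of the connected set $C$, so $\langle\alpha_s,\alpha_c^\vee\rangle=0$ for all $c\in C\setminus\{t\}$ and is nonzero (negative) for $c=t$. Thus $-\alpha_s$ is $W_C$-dominant with vanishing pairings exactly off $t$, and by the standard description of stabilizers of dominant weights $\mathrm{Stab}_{W_C}(\alpha_s)=W_{C\setminus\{t_C\}}$, giving the clean formula $n_{s,C}=[W_C:W_{C\setminus\{t_C\}}]$.

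The crux is the root-theoretic inequality $[W_C:W_{C\setminus\{t_C\}}]\ge|C|+1$, with equality if and only if $C$ is of type $A_{|C|}$ and $t_C$ is an end node (so $C\setminus\{t_C\}$ is $A_{|C|-1}$, matching $3(a)$). Geometrically this index is the number of vertices of $\mathrm{Conv}(W_C\cdot\omega_{t_C})$, which spans the $|C|$-dimensional reflection space and so has at least $|C|+1$ vertices, the minimum being attained exactly by the simplex, i.e.\ by the end-node maximal parabolic in type $A$. I expect this minimal maximal-parabolic-index statement, together with the stabilizer identification above, to be the main obstacle: it is a standard fact, but must be pinned down either by the convexity argument just sketched or by a short pass through the classification of irreducible diagrams.

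Granting this, the rest is bookkeeping. From $n_{s,C}\ge|C|+1\ge 2$ and $\prod_{C\in D(s)}(1+|C|)\ge 1+\sum_{C\in D(s)}|C|$ we get $|S^J_s|-1\ge\sum_{C\in D(s)}|C|$, and summing over $s$,
\[
|S^J|-|S\setminus J|=\sum_{s\in S\setminus J}\bigl(|S^J_s|-1\bigr)\ \ge\ \sum_{C\in\pi_0(J)}|C|\cdot\#\{\,s\in S\setminus J:\ s\ \text{adjacent to}\ C\,\}\ \ge\ |J|,
\]
so $|S^J|\ge|S|$ always, reconfirming the polytope degree bound. Equality in every step forces precisely: $n_{s,C}=|C|+1$ for each adjacent pair, i.e.\ each relevant component is type $A$ attached at an end node ($3(a)$, Dynkin part); $|D(s)|\le 1$ for each $s$, i.e.\ $s$ meets at most one node $t\in J$ ($3(a)$, uniqueness of $t$); and each component is adjacent to exactly one $s\in S\setminus J$ ($3(b)$). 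Conversely, conditions $3(a)$ and $3(b)$ return equality at every step, yielding $|S^J|=|S|$. This establishes $2\Leftrightarrow 3$ and completes the proof.
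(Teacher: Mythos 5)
Your proposal is correct, and its combinatorial skeleton is the same as the paper's: identify the edges of $\mathscr{P}_\lambda$ at the vertex $\lambda$ with a set of size $|S^J|=\sum_{s\in S\setminus J}|S^J_s|$, factor each summand over the connected components of $J$ adjacent to $s$ as a product of indices $[W_C:W_{C\setminus\{t(s,C)\}}]$, invoke the lower bound $[W_C:W_{C\setminus\{t\}}]\geq |C|+1$ with equality exactly for the setup $A_{l-1}\subseteq A_l$, and then read off conditions $3(a)$ and $3(b)$ from the equality analysis. Where you genuinely diverge is in how the count is obtained: the paper computes $|\{f\in E_2(\overline{T})\mid fe_1=e_1\}|$ as a union of $W_J$-conjugacy classes $Cl_{W_J}(f_s)$ of rank-two idempotents, identifying the centralizer $C_{W_J}(f_s)$ via Proposition 6.27 of \cite{P} and Theorem 4.16 of \cite{PR}, whereas you stay entirely inside the Coxeter group, computing $|S^J_s|=[W_J:W_J\cap sW_Js]$ from the double coset $W_JsW_J$ and identifying the relevant stabilizer as $W_{C\setminus\{t_C\}}$ via the $W_C$-orbit of $\alpha_s$ (a Kilmoyer-type argument, using that the Dynkin diagram is a forest). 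This buys a monoid-free derivation of Equation \ref{the.equ}, in the spirit of the paper's own remark that a reader might avoid ``engaging the monoids,'' and you also make explicit two points the paper leaves to the reader: the equivalence $1\Leftrightarrow 2$ via vertex-transitivity plus the degree bound for $d$-polytopes, and the telescoping inequality $|S^J|\geq|S|$ whose equality case yields the three conditions. The one step you flag but do not nail down --- that $[W_C:W_{C\setminus\{t\}}]=|C|+1$ forces type $A$ with $t$ an end node --- is treated at exactly the same level in the paper (asserted as ``a basic fact about Weyl groups''), so you are not below its standard of rigor there; your convexity sketch gives the inequality cleanly but the equality characterization still needs either the simplex argument completed or a quick pass through the classification.
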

\begin{proof}
{\em 1} and {\em 2} are equivalent by standard results about polytopes.

Assume that {\em 3} holds. We now deduce from this that {\em 2} holds. This is
equivalent to the statement $|\{f\in E_2(\overline{T})\;|\;fe_1=e_1\}|=|S|$.
Let $\Lambda_2=\{f\in\Lambda\;|\; dim(fT)=2\}$, and recall that
\[
\Lambda_2\cong S\backslash J
\]
via the correspondence $f=f_s$ if $sf=fs\neq f$. See Theorem 4.16 of \cite{PR}.
So we write
\[
\Lambda_2=\{f_s\;|\; s\in S\backslash J\}.
\]
Then from part (iii) of Proposition 6.27 of \cite{P}
\[
\{f\in E_2(\overline{T})\;|\;fe_1=e_1\}=\bigcup_{w\in W_J}w\Lambda_2w^{-1}
=\bigsqcup_{s\in S\backslash J}Cl_{W_J}(f_s)
\]
where $Cl_{W_J}(f_s)$ is the $W_J$-conjugacy class of $f_s$.
Let $s\in S\backslash J$.\\
{\em Case 1:}  $st=ts$ for all $t\in J$.\\
Then $f_sw=wf_s$ for all $w\in W_J$. In this case $Cl_{W_J}(f_s)=\{f_s\}$.\\
{\em Case 2:} $ts\neq st$ for some unique $t\in J$. Let $C$ be that unique
connected component of $J$ with $t\in C$. Thus $C_{W_J}(f_s)=W_{J\backslash\{t\}}$ and,
consequently, $Cl_{W_J}(f_s)\cong W_J/W_{J\backslash\{t\}}$. But,
by part {\em (a)} of the assumption,
\[
W_J/W_{J\backslash\{t\}}\cong W_C/W_{C\backslash\{t\}}\cong S_{m+1}/S_m,
\]
where $|C|=m$ and $S_m$ is the symmetric group on $m$ letters. Thus
\[
|Cl_{W_J}(f_s)|=|S_{m+1}/S_m|=\frac{(m+1)!}{m!}=m+1.
\]
Since, by assumption, each $C$ occurs for exactly one $t\in S\backslash J$,
we conclude that
\[
|\{f\in E_2(\overline{T})\;|\;fe_1=e_1\}|=\left[\sum_{C\in\pi_0(J)}(|C|+1)\right]+|\{s\in
S\backslash J\;|\; st=ts\;\text{for all}\; t\in J\}|.
\]
But $\sum_{C\in\pi_0(J)}(|C|+1)=|J|+|\pi_0(J)|$ while $|\{s\in
S\backslash J\;|\; st=ts\;\text{for all}\; t\in J\}|=|S\backslash J|-
|\pi_0(J)|$. Thus, $|\{f\in E_2(\overline{T})\;|\;fe_1=e_1\}|=|S|$.

Assume {\em 2}, and let $s\in S\backslash J$. As above,
\[
\{f\in E_2(\overline{T})\;|\;fe_1=e_1\}
=\bigsqcup_{s\in S\backslash J}Cl_{W_J}(f_s).
\]
If $s\in S\setminus J$ there are two cases.\\
{\bf Case 1:} $st=ts$ for all $t\in J$.\\
In this case $Cl_{W_J}(f_s)=\{f_s\}$.\\
{\bf Case 2:} $st\neq ts$ for some $t\in J$.\\
For each such $t$ there is a unique $C\in \pi_0(J)$ such that $t\in C$.
This is because the connected components of $J$ are disjoint.

One then checks that,
\[
C_{W_C}(f_s) = W_{C\setminus\{t\}},
\]
where $t\in C$ is the unique element such that $st\neq ts$.
($t$ is unique since $S$ is a tree)

But $W_J=\Pi_{C\in\pi_0(J)}W_C$ and $C_{W_J}(f_s)=\Pi_{C\in\pi_0(J)}C_{W_C}(f_s)$.
Hence
\[
Cl_{W_J}(f_s)=\Pi_{C\in V_s}Cl_{W_C}(f_s)
\]
where $V_s=\{C\in\pi_0(J)\;|\; st\neq ts \;\text{for some}\; t\in C \}$.

Hence, for this $s\in S\setminus J$,
\[
|Cl_{W_J}(f_s)|=\Pi_{C\in V_s}|Cl_{W_C}(f_s)|.
\]
Thus
\[
|Cl_{W_J}(f_s)|=\Pi_{C\in V_s}|W_{C}/W_{C\backslash\{t(s,C)\}}|.
\]
where $t(s,C)$ is the unique element of $C$ that fails to commute with $s\in S\setminus J$.

Combining Case 1 and Case 2, we obtain that
\begin{equation} \label{the.equ}
|\{f\in E_2(\overline{T})\;|\;fe_1=e_1\}|=|Int(S\setminus J)| +
\sum_{s\in Bd(S\setminus J)}\Pi_{C\in V_s}|W_{C}/W_{C\backslash\{t(s,C)\}}|
\end{equation}
where $Int(S\setminus J)=\{v\in S\setminus J\;|\; vt=tv\;\text{for all}\; t\in J\}$ and
$Bd(S\setminus J)=\{v\in S\setminus J\;|\; vt\neq tv\;\text{for some}\; t\in J\}$. Notice that
\[
\pi_0(J)=\bigcup_{s\in Bd(S\setminus J)}V_s
\]
since any connected component $C$ of $J$ contains at least one element that fails to commute
with something in $S\setminus J$. Also it is a basic fact about Weyl groups that, if
$C\subseteq S$ is connected and $t\in C$
then $|W_{C}/W_{C\backslash\{t\}}|\geq |C|+1$, with equality
if and only if $C\backslash\{t\}\subseteq C$ is a setup of type
$A_{l-1}\subseteq A_l$.

One checks that if the right-hand-side of this equation is equal to $|S|$ then
all of the following must hold (since failure of any of them would make the RHS
of (\ref{the.equ}) larger than $|S|$).

\begin{enumerate}
	\item For each $s\in Bd(S\setminus J)$, and for any $C\in V_s$, $C\backslash\{t(s,C)\}\subseteq C$
	is a setup of type $A_{l-1}\subseteq A_l$.
	\item For each $s\in Bd(S\setminus J)$, $V_s$ contains exactly one element.
	\item For distinct elements $r,s\in S\setminus J$,
	$V_s\cap V_r = \emptyset$.
\end{enumerate}
It then follows easily from this, that {\em 3} holds.
\end{proof}

In the next two examples one can use Equation \ref{the.equ} to calculate
$|\{f\in E_2(\overline{T})\;|\;fe_1=e_1\}|$.

\begin{example} \label{comsm1.ex}
Let $(W,S)$ be a Weyl group of type $A_3$, so that
$S=\{r,s,t\}$ with $rs\neq sr$ and $st\neq ts$.
\begin{enumerate}
	\item [(a)]If $J=\{r,t\}$ then $|\{f\in E_2(\overline{T})\;|\;fe_1=e_1\}|=4$. In this example
	$V_s=\{\{r\},\{t\}\}$, which violates condition 2. at the end of the proof of
	Theorem \ref{almostsmooth.thm}.
	\item [(b)]If $J=\{s\}$ then $|\{f\in E_2(\overline{T})\;|\;fe_1=e_1\}|=4$. In this example
	$V_r=V_t=\{\{s\}\}$ which violates condition 3. at the end of the proof of
	Theorem \ref{almostsmooth.thm}.
	\item [(c)]If $J=\emptyset, \{r\}$, or $\{r,s\}$ then
	$|\{f\in E_2(\overline{T})\;|\;fe_1=e_1\}|=3$.
	So these ones are combinatorially smooth.
\end{enumerate}
\end{example}

\begin{example} \label{comsm2.ex}
Let $(W,S)$ be a Weyl group of type $C_3$, so that
$S=\{r,s,t\}$ with $rs\neq sr$ and $st\neq ts$, and
$t$ corresponds to a short root. If $J=\{s,t\}$
then $|\{f\in E_2(\overline{T})\;|\;fe_1=e_1\}|=4$. In this example
$\{t\}\subseteq \{s,t\}$ is a setup of type $A_1\subseteq C_2$ which
violates condition 1. at the end of the proof of Theorem \ref{almostsmooth.thm}.
\end{example}

Notice in particular, if $(W,S)$ is an irreducible Weyl group and
$J\subseteq S$ is a combinatorially smooth subset, then each connected component
of $J$ contains exactly one end-node of $S$.

\begin{corollary}    \label{howmany.thm}
For each irreducible Dynkin diagram we obtain the following calculation for
$\{J\subseteq S\;|\; J\;\text{is combinatorially smooth}\}$.
For each type the list is grouped into the different cases
depending on which of the end-nodes are elements of $J$.
\begin{enumerate}
  \item $A_1$.
  \begin{enumerate}
	\item $J=\phi$.
  \end{enumerate}
        $A_n$, $n\geq 2$. Let $S=\{s_1,...,s_n\}$.
\begin{enumerate}
  \item $J=\phi$.
	\item $J=\{s_1,...,s_i\}$, $1\leq i<n$.
	\item $J=\{s_j,...,s_n\}$, $1<j\leq n$.
	\item $J=\{s_1,...,s_i,s_j,...s_n\}$, $1\leq i$, $i\leq j-3$ and $j\leq n$.
\end{enumerate}
	\item $B_2$.
	\begin{enumerate}
	\item $J=\phi$.
	\item $J=\{s_1\}$.
	\item $J=\{s_2\}$.
\end{enumerate}
	      $B_n$, $n\geq 3$. Let $S=\{s_1,...,s_n\}$, $\alpha_n$ short.
\begin{enumerate}
  \item $J=\phi$.
	\item $J=\{s_1,...,s_i\}$, $1\leq i<n$.
	\item $J=\{s_n\}$.
	\item $J=\{s_1,...,s_i,s_n\}$, $1\leq i$ and $i\leq n-3$.
\end{enumerate}
   \item $C_n$, $n\geq 3$. Let $S=\{s_1,...,s_n\}$, $\alpha_n$ long.
\begin{enumerate}
  \item $J=\phi$.
	\item $J=\{s_1,...,s_i\}$, $1\leq i<n$.
	\item $J=\{s_n\}$.
	\item $J=\{s_1,...,s_i,s_n\}$, $1\leq i$ and $i\leq n-3$.
\end{enumerate}
	\item $D_n$, $n\geq 4$. Let $S=\{s_1,...s_{n-2},s_{n-1},s_n\}$.
\begin{enumerate}
	\item $J=\phi$.
	\item $J=\{s_1,...,s_i\}$, $i\leq n-3$.
	\item $J=\{s_{n-1}\}$.
	\item $J=\{s_n\}$.
	\item $J=\{s_1,...,s_i,s_{n-1}\}$, $i\leq n-4$.
	\item $J=\{s_1,...,s_i,s_n\}$, $i\leq n-4$.
\end{enumerate}
	\item $E_6$. Let $S=\{s_1,s_2,s_3,s_4,s_5,s_6\}$.
\begin{enumerate}
	\item $J=\phi$.
	\item $J=\{s_1\}$ or $\{s_1,s_2\}$.
	\item $J=\{s_5\}$ or $\{s_4,s_5\}$.
	\item $J=\{s_6\}$.
  \item $J=\{s_1,s_5\},\{s_1,s_2,s_5\}$ or $\{s_1,s_4,s_5\}$.
  \item $J=\{s_1,s_6\}$.
  \item $J=\{s_5,s_6\}$
  \item $J=\{s_1,s_5,s_6\}$.
\end{enumerate}
	\item $E_7$. Let $S=\{s_1,s_2,s_3,s_4,s_5,s_6,s_7\}$.
\begin{enumerate}
  \item $J=\phi$.
	\item $J=\{s_1\}, \{s_1,s_2\}$ or $\{s_1,s_2,s_3\}$.
	\item $J=\{s_6\}$ or $\{s_5,s_6\}$.
	\item $J=\{s_7\}$.
  \item $J=\{s_1,s_6\},\{s_1,s_2,s_6\},\{s_1,s_2,s_3,s_6\},\{s_1,s_5,s_6\},$
  or $\{s_1,s_2,s_5,s_6\}$.
  \item $J=\{s_6,s_7\}$.
  \item $J=\{s_1,s_7\}$ or $\{s_1,s_2,s_7\}$.
  \item $J=\{s_1,s_6,s_7\},\{s_1,s_2,s_6,s_7\}$.
\end{enumerate}
	\item $E_8$. Let $S=\{s_1,s_2,s_3,s_4,s_5,s_6,s_7,s_8\}$.
\begin{enumerate}
  \item $J=\phi$.
	\item $J=\{s_1\}, \{s_1,s_2\}, \{s_1,s_2,s_3\}$ or $\{s_1,s_2,s_3,s_4\}$.
	\item $J=\{s_7\}$ or $\{s_6,s_7\}$.
	\item $J=\{s_8\}$.
  \item $J=\{s_1,s_7\},\{s_1,s_2,s_7\},\{s_1,s_2,s_3,s_7\}, \{s_1,s_2,s_3,s_4,s_7\}$,\\
  $\{s_1,s_6,s_7\},\{s_1,s_2,s_6,s_7\}, \{s_1,s_2,s_3,s_6,s_7\}$
  or $\{s_1,s_2,s_5,s_6\}$.
  \item $J=\{s_7,s_8\}$.
  \item $J=\{s_1,s_8\}, \{s_1,s_2,s_8\}$ or $\{s_1,s_2,s_3,s_8\}$.
  \item $J=\{s_1,s_7,s_8\},\{s_1,s_2,s_7,s_8\}$.
\end{enumerate}
	\item $F_4$. Let $S=\{s_1,s_2,s_3,s_4\}$.
\begin{enumerate}
	\item $J=\phi$.
	\item $J=\{s_1\}$ or $\{s_1,s_2\}$.
	\item $J=\{s_4\}$ or $\{s_3,s_4\}$.
	\item $J=\{s_1,s_4\}$.
\end{enumerate}
	\item $G_2$. Let $S=\{s_1,s_2\}$.
\begin{enumerate}
	\item $J=\phi$.
	\item $J=\{s_1\}$.
	\item $J=\{s_2\}$.
\end{enumerate}
\end{enumerate}
\end{corollary}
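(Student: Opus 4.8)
The plan is to derive everything from the characterization of combinatorial smoothness in Theorem~\ref{almostsmooth.thm}, together with its structural consequence recorded just above: for irreducible $(W,S)$ each connected component of a combinatorially smooth $J$ is a type-$A$ subdiagram containing exactly one end-node of $S$. First I would reformulate condition {\em 3} of Theorem~\ref{almostsmooth.thm} as a concrete recipe for building $J$. Write $J=\bigsqcup_{C\in\pi_0(J)}C$. Condition (b) says each $C$ is attached to $S\setminus J$ through a single boundary reflection $s_C\in S\setminus J$, and condition (a) says $s_C$ fails to commute with a unique node $t\in C$, which must be an endpoint of the path $C$ (this is exactly the setup $C\setminus\{t\}\subseteq C$ of type $A_{l-1}\subseteq A_l$). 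Since the diagram is connected and $J$ is proper, every component acquires such an $s_C$, and the far endpoint of $C$ can have no neighbour outside $C$, forcing it to be an end-node of $S$. This yields the clean description: each component is an initial segment, starting at an end-node of $S$, of one of the arms of the Dynkin diagram; it can contain no branch node (otherwise condition (b) fails) and cannot cross a multiple edge (otherwise it is not of type $A$); and the boundary reflections attached to distinct components must be distinct and each adjacent to only one component.

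With this recipe the proof becomes a finite, diagram-by-diagram enumeration. For $A_n$ the diagram is a single path with end-nodes $s_1,s_n$, so a component is a prefix $\{s_1,\dots,s_i\}$ or a suffix $\{s_j,\dots,s_n\}$; when both occur the boundary reflections $s_{i+1}$ and $s_{j-1}$ must be distinct and each adjacent to a single component, which forces the gap condition $i\le j-3$. For $B_n$ and $C_n$ the terminal multiple edge means no component may straddle it, so any component meeting $s_n$ must be the singleton $\{s_n\}$; the prefix $\{s_1,\dots,s_i\}$ behaves as in type $A$, and combining it with $\{s_n\}$ reproduces the separation bound $i\le n-3$. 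For $D_n$ the branch node $s_{n-2}$ is the obstruction: a component containing it would be adjacent to two boundary reflections, violating (b), so the end-nodes $s_1,s_{n-1},s_n$ each generate only short arms, $\{s_{n-1}\}$ and $\{s_n\}$ cannot both lie in $J$ (they would share the boundary $s_{n-2}$), and a prefix combined with $\{s_{n-1}\}$ or $\{s_n\}$ must terminate two nodes short of the branch, giving $i\le n-4$.

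For the exceptional types the same three phenomena---end-nodes, branch nodes, and multiple edges---control everything. In $G_2$ the triple edge forces both components to be singletons; in $F_4$ the central double edge prevents any component from crossing it, and the only cross-arm combination surviving (a) and (b) is $\{s_1,s_4\}$. For $E_6,E_7,E_8$ I would fix the branch node, enumerate the type-$A$ initial segments allowed along each of the three arms, and retain only those unions of at most one segment per arm whose boundary reflections are pairwise distinct and each meet a single component; the branch node again forbids two segments from approaching it simultaneously, which is exactly what eliminates all but the tabulated combinations. Checking these finitely many candidates against Theorem~\ref{almostsmooth.thm} produces precisely the listed families.

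I expect the main obstacle to be organizational rather than conceptual: the bookkeeping for $E_7$ and $E_8$, where the arms are long and the number of candidate unions is large, must be done so that no admissible $J$ is omitted and none violating the uniqueness clauses (a),(b) is wrongly included. A secondary point requiring care is the correct identification of the $A_{l-1}\subseteq A_l$ subdiagrams in the presence of multiple edges (the $B_n,C_n,F_4,G_2$ cases) and of the branch node (the $D_n$ and $E$ cases), since this is exactly where naive interval counting over-counts, as illustrated by Examples~\ref{comsm1.ex} and~\ref{comsm2.ex}.
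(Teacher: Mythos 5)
Your proposal is correct and takes essentially the same route as the paper, whose entire proof is the one-line statement that the list is ``an elementary calculation with Dynkin diagrams using Theorem~\ref{almostsmooth.thm}'' plus the node-numbering conventions; your reformulation of condition \emph{3} (each component of $J$ is a simply-laced arm segment anchored at an end-node, with pairwise distinct boundary reflections each adjacent to a single node of $J$) is exactly the implicit recipe behind the tabulation. One caveat to your closing claim that the enumeration reproduces ``precisely the listed families'': carried out carefully for $E_8$, case (e) yields only seven sets, and the printed entry $\{s_1,s_2,s_5,s_6\}$ is a misprint (the component $\{s_5,s_6\}$ contains the branch node $s_5$ and hence has three boundary reflections, violating condition (b)), so your method actually corrects the stated list rather than matching it verbatim.
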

\begin{proof}
This is an elementary calculation with Dynkin diagrams using
Theorem~\ref{almostsmooth.thm}. The numbering of the elements of $S$
is as follows. For types $A_n, B_n, C_n, F_4,$ and $G_2$ it is the usual
numbering. In these cases the end nodes are $s_1$ and $s_n$. For type
$E_6$ the end nodes are $s_1,s_5$ and $s_6$ with $s_3s_6\neq s_6s_3$. For type
$E_7$ the end nodes are $s_1,s_6$ and $s_7$ with $s_4s_7\neq s_7s_4$. For type
$E_8$ the end nodes are $s_1,s_7$ and $s_8$ with $s_5s_8\neq s_8s_5$.
In each case  of type $E_n$, the nodes corresponding to $s_1, s_2,...,s_{n-1}$
determine the unique subdiagram of type $A_{n-1}$. For type $D_n$ the end
nodes are $s_1,s_{n-1}$ and $s_n$. The two subdiagrams of $D_n$, of type $A_{n-1}$,
correspond to the subsets $\{s_1, s_2,...,s_{n-2},s_{n-1}\}$ and
$\{s_1, s_2,...,s_{n-2},s_n\}$ of $S$.
\end{proof}

\begin{remark}   \label{ratsmdan.rk}
It is easy to check that $J\subseteq S$ is combinatorially smooth if and
only if $X(J)$ is rationally smooth. Indeed, this follows directly from
Corollary 2 on page 136 of \cite{Brion}.
\end{remark}

\section{The Descent System $(W^J,S^J)$}\label{descent.sec}

Let $(W,S)$ be a finite Weyl group and let $w\in W$. It is widely
appreciated \cite{BjBr,Br,S} that the {\bf descent set}
\[
D(w)=\{s\in S\;|\; l(ws)<l(w)\}
\]
determines a very large and important chapter in the study of Coxeter
groups. In this section we interpret the results of Sections~\ref{three.sec}
and ~\ref{two.sec} solely in the language of Coxeter groups applied
to $W$, $W^J$, $J\subseteq S$ and the Bruhat ordering on $W^J$. Our main
result here is the explicit identification of the subset $S^J\subseteq W^J$.

Recall, from Definition \ref{descentset.def}, that
\[
S^J=(W_J(S\setminus J)W_J)\cap W^J.
\]
We refer to $(W^J,S^J)$ as the {\em descent system} associated with
$J\subseteq S$.

\begin{proposition} \label{csvsdescent2.prop}
Let $(W^J,S^J)$ be the descent system associated with $J\subseteq S$.
The following are equivalent.
\begin{enumerate}
 \item $J$ is combinatorially smooth.
 \item $|S^J|=|S|$.
 \item $X(J)$ is rationally smooth.
\end{enumerate}
\end{proposition}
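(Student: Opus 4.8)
The plan is to prove the equivalences by leveraging results already established in the excerpt, reducing the whole proposition to essentially one new observation. First I would observe that the equivalence of items \emph{1} and \emph{3} is already recorded: by Definition~\ref{combsmooth.def}, $J$ is combinatorially smooth precisely when $\mathscr{P}_\lambda$ is a simple polytope, and the discussion at the start of Section~\ref{two.sec} (together with Remark~\ref{ratsmdan.rk}) states that $\mathscr{P}_\lambda$ is simple if and only if $X(J)$ is rationally smooth. So items \emph{1} and \emph{3} may be treated as known, and the real content is the equivalence of these with item \emph{2}, the purely combinatorial counting condition $|S^J|=|S|$.

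The bridge to the counting condition runs through Proposition~\ref{csvsdescent.prop}, which supplies the canonical identification
\[
S^J\cong\{g\in E_2\;|\;ge_1=e_1\}.
\]
Hence $|S^J|=|\{f\in E_2(\overline{T})\;|\;fe_1=e_1\}|$, and item \emph{2} becomes exactly the statement $|\{f\in E_2(\overline{T})\;|\;fe_1=e_1\}|=|S|$. Now I would invoke Theorem~\ref{almostsmooth.thm}: the equivalence of its conditions \emph{1} and \emph{2} says that $\mathscr{P}_\lambda$ is a simple polytope if and only if there are exactly $|S|$ edges of $\mathscr{P}_\lambda$ meeting at $\lambda$. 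Using the natural correspondence (see Remark~\ref{lexicon.rk}) between the edges of $\mathscr{P}_\lambda$ through the vertex $\lambda$ and the rank-two idempotents $f\in E_2$ with $fe_1=e_1$, the number of such edges is precisely $|\{f\in E_2(\overline{T})\;|\;fe_1=e_1\}|$. Chaining these identifications gives that $J$ is combinatorially smooth if and only if $|S^J|=|S|$, which is the equivalence of \emph{1} and \emph{2}.

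The argument therefore reduces to citing Proposition~\ref{csvsdescent.prop}, Theorem~\ref{almostsmooth.thm} (equivalence of its parts \emph{1} and \emph{2}), and Remark~\ref{ratsmdan.rk}, and then assembling the chain $\emph{3}\Leftrightarrow\emph{1}\Leftrightarrow\emph{2}$. The main obstacle, such as it is, is bookkeeping rather than mathematics: I must make sure the identification in Proposition~\ref{csvsdescent.prop} is genuinely the same set being counted as the ``edges meeting at $\lambda$'' in Theorem~\ref{almostsmooth.thm}, i.e. that the vertex $\lambda$ of $\mathscr{P}_\lambda$ corresponds to the idempotent $e_1$ under the face-lattice/idempotent dictionary. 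This is exactly what is asserted in the paragraph following Corollary~\ref{polyfromjirr.thm} and summarized in the last line of the table in Remark~\ref{lexicon.rk}, so once that correspondence is invoked explicitly, the proof is a short concatenation of already-proved equivalences with no further computation required.
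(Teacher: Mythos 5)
Your proposal is correct and follows essentially the same route as the paper: the equivalence of \emph{1} and \emph{2} via the identification $S^J\cong\{g\in E_2\;|\;ge_1=e_1\}$ from Proposition~\ref{csvsdescent.prop} combined with part \emph{2} of Theorem~\ref{almostsmooth.thm}, and the equivalence of \emph{1} and \emph{3} via Remark~\ref{ratsmdan.rk}. Your extra care in checking that the vertex $\lambda$ corresponds to $e_1$ under the face-lattice/idempotent dictionary is exactly the bookkeeping the paper leaves implicit.
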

\begin{proof}
The equivalence of {\em 1} and {\em 2} follows from Proposition~\ref{csvsdescent.prop}
using part {\em 2} of Theorem~\ref{almostsmooth.thm}. The equivalence of {\em 1} and {\em 3}
follows from Remark \ref{ratsmdan.rk}.
\end{proof}

Assume that $J\subseteq S$ is combinatorially smooth. Recall that, for $s\in
S\setminus J$,
\[
S^J_s=(W_JsW_J)\cap W^J.
\]
Recall now, that for $s\in S\setminus J$, there is a unique $g_s\in\Lambda_2$
such that $\{s\}=\{t\in S\;|\;tg_s=g_st\neq g_s\}$. Furthermore,
$s\leadsto g_s$ determines a bijection between $S\setminus J$
and $\Lambda_2$. Each $g\in E_2(\overline{T})$ is conjugate to a unique
$g_s$, $s\in S\setminus J$. See part {\em 2} of Theorem~\ref{jirredorbit.thm}.

\begin{theorem} \label{whatissj.thm}
Assume that $J\subseteq S$ is combinatorially smooth. Then
\begin{enumerate}
	\item $S^J=\bigsqcup_{s\in S\setminus J}S^J_s$.
	\item Let $s\in S\setminus J$. In case $st=ts$ for all $t\in J$, $S^J_s=\{s\}$.
	Otherwise, $S^J_s=\{s,t_1s,t_2t_1s,...,t_m\cdots t_2t_1s\}$
	where $C=C_s=\{t_1,t_2,...,t_m\}$, $st_1\neq t_1s$ and $t_it_{i+1}\neq t_{i+1}t_i$
	for for $i=1,...,m-1$.
	\item $S^J_s\cong\{g\in E_2\;|\;ge_1=e_1\;\text{and}\;cgc^{-1}=g_s\;
	\text{for some}\; c\in W_J\}$.
\end{enumerate}
\end{theorem}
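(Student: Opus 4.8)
The plan is to dispatch parts 1 and 3 immediately from the idempotent dictionary of Section~\ref{three.sec}, and to reserve the real work for the explicit list in part 2. For part 3, recall from Proposition~\ref{csvsdescent.prop} and Remark~\ref{sjs.rk} that $\theta$ is a bijection $S^J\cong\{g\in E_2\mid ge_1=e_1\}$ carrying $S^J_s=(W_JsW_J)\cap W^J$ onto $\{g\in E_2\mid ge_1=e_1\ \text{and}\ g=wg_sw^{-1}\ \text{for some}\ w\in W_J\}$; since the condition ``$g=wg_sw^{-1}$ for some $w\in W_J$'' is the same as ``$cgc^{-1}=g_s$ for some $c\in W_J$'' (take $c=w^{-1}$), part 3 is exactly this transported statement. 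For part 1 I would combine the tautology $W_J(S\setminus J)W_J=\bigcup_{s\in S\setminus J}W_JsW_J$, which gives $S^J=\bigcup_s S^J_s$, with Theorem~\ref{jirredorbit.thm}(2): each rank-two idempotent is conjugate to exactly one $g_s$, so the classes appearing in the image of $\theta$ are genuinely indexed by $S\setminus J$ and the union is disjoint.

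The substance is part 2, and here I would first reduce to a rank-two parabolic. Writing $C$ for the connected component of $J$ meeting $s$ nontrivially, combinatorial smoothness (part 3 of Theorem~\ref{almostsmooth.thm}) guarantees that $s$ fails to commute with a single $t_1\in J$, that $C$ is of type $A$, and that $t_1$ is an end node of $C$; thus $C=\{t_1,\dots,t_m\}$ is the chain $t_1-t_2-\cdots-t_m$ and $C'=\{s\}\cup C$ is the chain $s-t_1-\cdots-t_m$ of type $A_{m+1}$, so $W_{C'}\cong S_{m+2}$. Because $s$ commutes with all of $J\setminus\{t_1\}$, the right-descent tests of $w_k:=t_k\cdots t_1 s$ against $J\setminus C$ are automatic, so ``$w_k\in W^J$'' reduces to checking that $w_k$ has no right descent in $C$. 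In the symmetric-group model, with $s=(1\,2)$ and $t_i=(i{+}1,\,i{+}2)$, I would track one-line notation to find that $w_k$ sends $1\mapsto k{+}2$, sends $j\mapsto j{-}1$ for $2\le j\le k{+}2$, and fixes the rest; this is visibly increasing on $\{2,\dots,m{+}2\}$, so $w_k\in(W_{C'})^C\subseteq W^J$, it is reduced with $\ell(w_k)=k{+}1$, and $w_k\in W_Cs\subseteq W_JsW_J$. Hence $\{w_0,\dots,w_m\}\subseteq S^J_s$, and these are $m{+}1$ distinct elements since their lengths are distinct.

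To see there are no others I would count. The clean route uses part 3 just proved: $|S^J_s|$ equals the size of the $W_J$-conjugacy class of $g_s$ inside $\{g\mid ge_1=e_1\}$, which is precisely the quantity $|W_J/W_{J\setminus\{t_1\}}|=|W_C/W_{C\setminus\{t_1\}}|=|S_{m+1}/S_m|=m+1$ computed in Case 2 of the proof of Theorem~\ref{almostsmooth.thm}. Alternatively, and without invoking the monoids, one may count the left cosets in $W_JsW_J$ directly: their number is $[W_J:W_J\cap sW_Js]$, and conjugating $W_C=S_{\{2,\dots,m+2\}}$ by $s=(1\,2)$ shows $W_J\cap sW_Js=W_{J\setminus\{t_1\}}$, again of index $m+1$. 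Either way $|S^J_s|=m+1$, so the list exhausts $S^J_s$, proving the ``otherwise'' clause; the remaining case $st=ts$ for all $t\in J$ gives a single left coset (here $sW_Js=W_J$) and $s\in W^J$, so $S^J_s=\{s\}$.

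The main obstacle is the part-2 verification that the displayed products are exactly the minimal coset representatives. The delicate point is that the specific shape of the list, the nested chain $s,\,t_1s,\,t_2t_1s,\dots$, is forced by, and only by, the two combinatorial-smoothness constraints, namely that $t_1$ be an end node and that $C$ be of type $A$; so the length and one-line-notation computation must be married carefully to these hypotheses rather than to a generic component $C$, where neither the clean chain nor the count $m+1$ would survive.
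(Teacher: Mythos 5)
Your proof is correct and follows essentially the same route as the paper: parts 1 and 3 are read off from Remark~\ref{sjs.rk} and Proposition~\ref{csvsdescent.prop}, and part 2 is the ``standard inclusion $S_m\subseteq S_{m+1}$'' computation together with the count $|W_C/W_{C\setminus\{t_1\}}|=m+1$ from the proof of Theorem~\ref{almostsmooth.thm}. The paper's proof is only a citation of these ingredients; yours supplies the details (the one-line-notation verification that each $t_k\cdots t_1s$ lies in $W^J$, and the coset count via $W_J\cap sW_Js=W_{J\setminus\{t_1\}}$) correctly.
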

\begin{proof}
Part {\em 1} follows from Remark \ref{sjs.rk}.
Part {\em 2} follows from well-known information about the standard inclusion of 
symmetric groups $S_{n}\subseteq S_{n+1}$. See Theorem~\ref{almostsmooth.thm} above.
Part {\em 3} follows from Remark \ref{sjs.rk}. See also the proof of Proposition
\ref{csvsdescent.prop}.
\end{proof}

\begin{example}  \label{theeasyeg.ex}
Let
\[
W=<s_1,...s_n>
\]
be the Weyl group of type $A_n$  (so that $W\cong S_{n+1}$), and let
\[
J=\{s_2,...,s_n\}\subseteq S=\{s_1,...,s_n\}.
\]
Then $J\subseteq S$ is combinatorially smooth. One checks, using
Theorem~\ref{whatissj.thm}, that
\[
W^J=\{1,s_1,s_2s_1,s_3s_2s_1,..., s_ns_{n-1}\cdots s_2s_1\}.
\]
Notice that
\[
1<s_1<s_2s_1<...<s_ns_{n-1}\cdots s_1.
\]
In this very special example we obtain that $S^J=W^J\setminus\{1\}$.
Furthermore,
\[
A^J(w)=A^J_{s_1}(w)
\]
for each $w\in W^J$, since $S\setminus J = \{s_1\}$. Finally we obtain, by calculation, that
\begin{enumerate}
\item[] $(s_j\cdots s_1)(s_1) = [s_j\cdots s_2]$,
\item[] $(s_j\cdots s_1)(s_i\cdots s_1) = (s_{i-1}\cdots s_1)[s_j\cdots s_2]$ if $1<i\leq j$, and
\item[] $(s_j\cdots s_1)(s_i\cdots s_1) = (s_i\cdots s_1)[s_{j+1}\cdots s_2]$ if $i>j\geq 1$.
\end{enumerate}
We conclude from this that
\[
A^J(s_j\cdots s_1)=\{s_m\cdots s_1\;|\; m>j\}.
\]
Let us write $a_j=s_j\cdots s_1$ if we think of $s_j\cdots s_1\in W^J$, and
$r_j=s_j\cdots s_1$ if we think of $s_j\cdots s_1\in S^J$. Also,
if $w\in W$, we write $w_0$ for the element of minimal length in $wW_J$.
By the calculation above we obtain that
\begin{enumerate}
	\item[] $(a_jr_i)_0=1<a_j$ if $1=i\leq j$,
	\item[] $(a_jr_i)_0=a_{i-1}<a_j$ if $1<i\leq j$, and
	\item[] $(a_jr_i)_0=a_i>a_j$ if $i>j$.
\end{enumerate}
\end{example}

\begin{example} \label{thategfromwgspbppp.ex}
Let
\[
W=<s_1,...s_n>
\]
be the Weyl group of type $A_n$ (so that $W\cong S_{n+1}$),
and let
\[
J=\{s_3,...s_n\}\subseteq S.
\]
Notice that $J\subseteq S$ is combinatorially smooth.

If $w\in W^J$ then $w=a_p$, $w=b_q$, or else  $w=a_pb_q$.
Here $a_p=s_p\cdots s_1$ ($1\leq p\leq n$) and $b_q=s_q\cdots s_2$ ($2\leq q\leq n$).
If we adopt the useful convention $a_0=1$ and $b_1=1$, then we can write
\[
W^J=\{a_pb_q\;|\;0\leq p\leq n\;\text{and}\;1\leq q\leq n\}
\]
with uniqueness of decomposition. Let $w=a_pb_q\in W^J$. After some tedious
calculation with braid relations and reflections, we obtain that
\begin{enumerate}
	\item[a)] $A^J_{s_1}(a_pb_q)=\{s_1\}$ if $p < q$.\\
	      $A^J_{s_1}(a_pb_q)=\emptyset$ if $q \leq p$.\\
	      Thus $\nu_{s_1}(a_pb_q)=1$ if $p < q$ and $\nu_{s_1}(a_pb_q)=0$ if $q \leq p$.
	\item[b)] $A^J_{s_2}(a_pb_q)=\{s_m\cdots s_n\;|\; m>q\}$ if $q<n$.\\
	      $A^J_{s_2}(a_pb_q)=\emptyset$ if $q=n$.\\
	      Thus $\nu_{s_2}(a_pb_q)=n-q$.
\end{enumerate}
It is interesting to compute the two-parameter ``Euler polynomial"
\[
H(t_1,t_2)=\sum_{w\in W^J}t_1^{\nu_1(w)}t_2^{\nu_2(w)}
\]
of the augmented
poset $(W^J,\leq,\{\nu_1,\nu_2\})$ (where we write $\nu_i$ for $\nu_{s_i}$). A
simple calculation yields
\[
H(t_1,t_2)=\sum_{k=1}^n[kt_1+(n+1-k)]t_2^{n-k}.
\]
\end{example}

\vspace{20pt}

\noindent Lex E. Renner \\
Department of Mathematics \\
University of Western Ontario \\
London, N6A 5B7, Canada \\

\enddocument